\newcommand{\kom}[1]{}
\renewcommand{\kom}[1]{{\bf [#1]}}
 \def\1{\raisebox{2pt}{\rm{$\chi$}}}
\def\a{{\bf a}}
\newtheorem{theorem}{Theorem}[section]
\newtheorem{corollary}[theorem]{Corollary}
\newtheorem{lemma}[theorem]{Lemma}
\newtheorem{proposition}[theorem]{Proposition}
\newtheorem{definition}[theorem]{Definition}
\newcommand{\R}{{\mathbb R}}
\newcommand{\N}{{\mathbb N}}
 \newcommand{\eps}{{\varepsilon}}
 \def\1{\raisebox{2pt}{\rm{$\chi$}}}
\newcommand{\abs}[1]{\left|#1\right|}
\newcommand{\norm}[1]{\left|\left|#1\right|\right|}
\newcommand{\Rn}{\mathbb{R}^n}
\newcommand{\diver}{\operatorname{div}}
\newcommand{\osc}{\operatorname{osc}}
\def\vint_#1{\mathchoice%
          {\mathop{\kern 0.2em\vrule width 0.6em height 0.69678ex depth -0.58065ex
                  \kern -0.8em \intop}\nolimits_{\kern -0.4em#1}}%
          {\mathop{\kern 0.1em\vrule width 0.5em height 0.69678ex depth -0.60387ex
                  \kern -0.6em \intop}\nolimits_{#1}}%
          {\mathop{\kern 0.1em\vrule width 0.5em height 0.69678ex
              depth -0.60387ex
                  \kern -0.6em \intop}\nolimits_{#1}}%
          {\mathop{\kern 0.1em\vrule width 0.5em height 0.69678ex depth -0.60387ex
                  \kern -0.6em \intop}\nolimits_{#1}}}
\def\vintslides_#1{\mathchoice%
          {\mathop{\kern 0.1em\vrule width 0.5em height 0.697ex depth -0.581ex
                  \kern -0.6em \intop}\nolimits_{\kern -0.4em#1}}%
          {\mathop{\kern 0.1em\vrule width 0.3em height 0.697ex depth -0.604ex
                  \kern -0.4em \intop}\nolimits_{#1}}%
          {\mathop{\kern 0.1em\vrule width 0.3em height 0.697ex depth -0.604ex
                  \kern -0.4em \intop}\nolimits_{#1}}%
          {\mathop{\kern 0.1em\vrule width 0.3em height 0.697ex depth -0.604ex
                  \kern -0.4em \intop}\nolimits_{#1}}}
\newcommand{\aveint}[2]{\mathchoice%
          {\mathop{\kern 0.2em\vrule width 0.6em height 0.69678ex depth -0.58065ex
                  \kern -0.8em \intop}\nolimits_{\kern -0.45em#1}^{#2}}%
          {\mathop{\kern 0.1em\vrule width 0.5em height 0.69678ex depth -0.60387ex
                  \kern -0.6em \intop}\nolimits_{#1}^{#2}}%
          {\mathop{\kern 0.1em\vrule width 0.5em height 0.69678ex depth -0.60387ex
                  \kern -0.6em \intop}\nolimits_{#1}^{#2}}%
          {\mathop{\kern 0.1em\vrule width 0.5em height 0.69678ex depth -0.60387ex
                  \kern -0.6em \intop}\nolimits_{#1}^{#2}}}
\newcommand{\ol}{\overline}
\newcommand{\Om}{\Omega}
\newcommand{\dist}{\operatorname{dist}}
\newcommand{\vp}{\varphi}
\numberwithin{equation}{section}
\newcommand{\diam}{\operatorname{diam}}
\newcommand{\tr}{\operatorname{tr}}
\renewcommand{\a}{\alpha}
\author[Attouchi]{Amal Attouchi}
\address{Department of Mathematics and Statistics, University of
	Jyv\"askyl\"a, PO~Box~35, FI-40014 Jyv\"askyl\"a, Finland}
\email{amalattouchi@gmail.com}
\author[Parviainen]{Mikko Parviainen}
\address{Department of Mathematics and Statistics, University of
	Jyv\"askyl\"a, PO~Box~35, FI-40014 Jyv\"askyl\"a, Finland}
\email{mikko.j.parviainen@jyu.fi}
\date{\today}
\keywords{Normalized $p$-Laplacian, parabolic, non-homogeneous, viscosity solutions, local $C^{1+\alpha,(1+\alpha)/2}$ regularity} 
\subjclass[2010]{35K55, 35K92, 35B65, 35D40.}
\begin{document}
	\title[Regularity for normalized $p$-Laplacian]{ H\"older regularity for the   gradient of the inhomogeneous parabolic  normalized $p$-Laplacian}
	
	\date{\today}
	\begin{abstract}
		In this paper we study an evolution equation involving the normalized $p$-Laplacian and a bounded  continuous source term. The normalized $p$-Laplacian is in non divergence form and arises for example from stochastic tug-of-war games with noise.
		 We prove local $C^{\alpha, \frac{\alpha}{2}}$ regularity for the spatial gradient of the viscosity solutions. The proof is based on an improvement of flatness and proceeds by iteration.
	\end{abstract}
\maketitle

\section{Introduction}
\label{sec:intro}

In this paper we are interested in  local regularity properties  for viscosity solutions of the normalized $p$-Laplace equation
\begin{equation}
\label{normpl}
\partial_t u(x,t)-\Delta_p^{N}u(x,t)=f(x,t),
\end{equation}
where $p\in (1, \infty)$ and $f$ is a continuous and bounded function.

The normalized $p$-Laplacian can be seen as the one-homogeneous version of the standard $p$-Laplacian and also as a combination of the Laplacian and the normalized infinity Laplacian,
\begin{align*}
\Delta_p^{N}u:&=|Du|^{2-p} \Delta_p u=\Delta u+(p-2)\Delta_{\infty}^N u\\
&=\Delta u+(p-2) |Du|^{-2}\sum_{ij} u_{ij}u_iu_j,
\end{align*}
where $\Delta_p u=\diver (|Du|^{p-2} Du)$,  $u_{ij}=\dfrac{\partial^2u}{\partial x_i\partial x_j}$ and $u_i=\dfrac{\partial u}{\partial x_i}$.

Recently,  a connection
between the theory of stochastic tug-of-war games and non-linear equations of
$p$-Laplacian type has been investigated.   In the elliptic case, this connection started with the seminal  work of Peres, Schramm, Sheffield and Wilson \cite{peress08, peresssw09}. In the parabolic case,  Manfredi, Parviainen and Rossi \cite{manfredipr10c}  showed that solutions  to   \eqref{normpl}  can be  obtained  as  limits  of  values  of  tug-of-war  games with noise  when
the parameter that controls the length of the possible movements goes to zero.

Equations of type \eqref{normpl} have been suggested in connection to  economics \cite{nystpar} and image processing \cite{does11,  moetz}. 

Existence of viscosity solutions of \eqref{normpl} has been proved using different techniques, including game-theoretic arguments \cite{manfredipr10c} and approximation methods (see \cite{bang13, bang15, does11}).  Regularity issues for this problem were analyzed in \cite{bang15,does11}  where the authors proved Lipschitz  estimates and bounds for the
modulus of continuity of \eqref{normpl} using PDE techniques.
In  \cite{parvruo} the authors obtained local H\"older and Lipschitz estimates using a  game theoretic method for the case $p=p(x,t)>2$.
The asymptotic behavior for \eqref{normpl} has been investigated in \cite{juutas, bang13, does11}.

The H\"older continuity of the solutions  follows from  the general regularity theory developed by Krylov and Safonov for equations in non-divergence form \cite{krylov_para_book,LWang1,cyril_luis}.
 Recently, for the homogeneous case $f=0$, Jin and Silvestre \cite{JinSil2015} proved  H\"older gradient estimates for solutions of \eqref{normpl}.  
They extended their method to prove H\"older gradient estimates for a class of singular or degenerate parabolic equations \cite{sil16}.

The normalized $p$-Laplacian enjoys the good  properties of being  uniformly parabolic and  1-homogeneous, the main difficulty in  proving regularity results comes from the discontinuity  at $\left\{ D u=0\right\}$.

Let $\Om$ be a bounded  domain of $\R^n$ and $T>0$. Denote $\Omega_T=\Omega\times (0, T)$. Our main contribution  is the following.
\begin{theorem}\label{thm:main1}
	Assume that $p>1$
	and $f\in L^{\infty}(\Omega_T)\cap C(\Omega_T)$.  There exists $\alpha=\alpha(p,n)>0$  such that any viscosity solution $u$ of
	\eqref{normpl} is in $C^{1+\alpha, \frac{1+\alpha}{2}}_{loc}(\Omega_T)$. Moreover, for any $\Om'\subset\subset\Om$ and $\eps>0$, we have
	\begin{equation}\label{fer}
	||Du||_{C^{\alpha, \frac\alpha2}(\Om'\times(\eps, T-\eps))} \le C \left(||u||_{L^\infty(\Om_T)} +
	\norm{f}_{L^\infty(\Om_T)} \right)
	\end{equation}
	and 
	\begin{equation}\label{timefer}
	\sup_{\substack{(x,t),(x,s)\in \Om'\times(\eps, T-\eps),\\
			t\neq s}}\,\frac{|u(x,t)-u(x,s)|}{|t-s|^{\frac{1+\alpha}{2}}}\leq C \left(||u||_{L^\infty(\Om_T)} +
	\norm{f}_{L^\infty(\Om_T)} \right),
	\end{equation}
	where $C=C(p,n,d,\eps,d')$, $d'=\dist(\Om', \partial\Om)$ and $d=\diam (\Om)$.
\end{theorem}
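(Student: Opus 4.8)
The strategy is an improvement-of-flatness argument carried out in intrinsic parabolic cylinders, iterated to produce $C^{1+\alpha}$ regularity of $u$ in space and $C^{(1+\alpha)/2}$ regularity in time. I would work with the rescaled solutions $u_r(x,t) = \bigl(u(x_0 + rx, t_0 + r^2 t) - u(x_0,t_0)\bigr)/r$, which solve an equation of the same type with source $r f(x_0+rx, t_0+r^2t)$; since the equation is $1$-homogeneous and the normalized $p$-Laplacian is invariant under this parabolic scaling, the source term picks up a factor $r$ and becomes small. The heart of the matter is the following dichotomy at each scale: either the spatial gradient $Du$ stays away from zero in a smaller cylinder — in which case \eqref{normpl} is locally a uniformly parabolic equation with bounded measurable coefficients and the classical linear theory (Krylov--Safonov together with the Schauder-type $C^{1+\alpha}$ estimates, or Jin--Silvestre's estimates for the homogeneous part) applies directly to give the oscillation decay of $Du$ — or else $Du$ is small compared to the scale, in which case $u$ is close to a solution of the homogeneous problem. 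In the second regime one compares $u$ with a $p$-harmonic-type function (for the normalized operator, with the regularity result of Jin--Silvestre providing a reference function with $C^{1+\alpha}$ estimates), and uses the smallness of $f$ and of $Du$ to absorb the error, concluding that after shrinking the cylinder the solution is flatter than before, i.e. is closer to an affine function of $x$.

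Concretely, the iteration I would set up proves: there are constants $\lambda \in (0,1)$, $C_0>0$, and vectors $\xi_k \in \R^n$ such that, writing $Q_\rho$ for the intrinsic parabolic cylinder of radius $\rho$,
\begin{equation*}
\osc_{Q_{\lambda^k}} \bigl( u - \xi_k\cdot x \bigr) \le C_0\, \lambda^{k(1+\alpha)} \bigl( \|u\|_{L^\infty} + \|f\|_{L^\infty} \bigr),
\qquad |\xi_{k+1} - \xi_k| \le C_0\, \lambda^{k\alpha}\bigl( \|u\|_{L^\infty} + \|f\|_{L^\infty}\bigr).
\end{equation*}
The second estimate forces $\xi_k$ to converge to some $\xi_\infty = Du(x_0,t_0)$, and the first then yields, by a standard telescoping/summation argument, the pointwise $C^{1+\alpha}$ bound $|u(x,t) - u(x_0,t_0) - \xi_\infty\cdot(x-x_0)| \lesssim (|x-x_0| + |t-t_0|^{1/2})^{1+\alpha}$, which by Campanato-type reasoning upgrades to the claimed $C^{\alpha,\alpha/2}$ estimate \eqref{fer} for $Du$; the time regularity \eqref{timefer} comes out of the same cylinder estimates by taking $x=x_0$. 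The inductive step is exactly the flatness improvement described above, applied to the rescaled (and flattened) solution $v_k(x,t) = \lambda^{-k(1+\alpha)}\bigl(u(x_0 + \lambda^k x, t_0+\lambda^{2k}t) - u(x_0,t_0) - \xi_k\cdot \lambda^k x\bigr)$, which again solves an equation of the form \eqref{normpl} with a source of size $O(\lambda^{k(1-\alpha)})$, hence small provided $\alpha<1$.

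The main obstacle — and the reason $\alpha$ is not explicit and depends on $p,n$ — is the degeneracy of the coefficients at $\{Du = 0\}$: the dichotomy must be organized so that in the "small gradient" regime the comparison is with the correct homogeneous model and the errors genuinely close up, while in the "large gradient" regime the ellipticity constants, although bounded, depend on how large $|Du|$ is relative to the scale, so one has to track that the regularized equation's ellipticity ratio stays controlled uniformly along the iteration. I would handle this with a case analysis parametrized by a threshold $\delta$: if at scale $\lambda^k$ one has $|\xi_k| \ge \delta\,\lambda^{k\alpha}(\cdots)$ the equation is non-degenerate on that cylinder with ellipticity depending only on $p$ (since $|Du|$ is comparable to $|\xi_k|$ there, and the normalized operator's coefficients depend only on the direction of $Du$), so linear $C^{1+\alpha}$ estimates close the induction with no further loss; if $|\xi_k| < \delta\,\lambda^{k\alpha}(\cdots)$, then after rescaling the gradient is $O(\delta)$, $u$ is $O(\delta)$-close to a solution of the $f=0$ equation, whose gradient Hölder estimate (Jin--Silvestre) gives flatness improvement with a universal gain. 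Choosing $\delta$, then $\lambda$, then $\alpha$ small in that order makes the two cases compatible; a compactness (normal families) argument for the homogeneous case, plus stability of viscosity solutions under uniform convergence, is used to extract the comparison function and make "closeness" quantitative.
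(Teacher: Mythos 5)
Your high-level plan — rescale, subtract an affine function at each scale, and run an improvement-of-flatness iteration with a dichotomy between "gradient dominated by $\xi_k$" and "gradient small" — is the right framework and is indeed the skeleton of the paper's proof (Lemmas~\ref{flatle} and~\ref{lemiter}). The iteration estimate you write down, with $\xi_{k+1}-\xi_k$ geometric in $\lambda^{k\alpha}$, is exactly \eqref{merci} and \eqref{aminater}, and the passage from that to \eqref{fer} via a Campanato-type characterization is Lemma~\ref{lemendi}.

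The gap is in the non-degenerate branch of your dichotomy. You argue that once $|\xi_k| \ge \delta\lambda^{k\alpha}$, "the equation is non-degenerate on that cylinder... since $|Du|$ is comparable to $|\xi_k|$ there," and then invoke Schauder/Krylov--Safonov or Jin--Silvestre directly. But this is circular: the inductive hypothesis controls only $\osc_{Q_{\lambda^k}}(u-\xi_k\cdot x)\le C_0\lambda^{k(1+\alpha)}$, which is an $L^\infty$ bound on $u-\xi_k\cdot x$, not a pointwise bound on $Du-\xi_k$. Without a gradient bound on the rescaled deviation $v_k$ that is \emph{uniform in the rescaled parameter} $q_k=\xi_k/\lambda^{k\alpha}$, you cannot conclude that $Du$ stays close to $\xi_k$, hence cannot conclude the coefficients $A(Du)$ are continuous (note also that the normalized $p$-Laplacian is always uniformly parabolic with $\lambda=\min(1,p-1)$, $\Lambda=\max(1,p-1)$; the difficulty is a bounded \emph{discontinuity} of $A(\cdot)$ at $0$, not a degenerating ellipticity ratio). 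The Lipschitz estimate from Does/Banerjee--Garofalo for $u$ itself does not help, since after subtracting $\xi_k\cdot x$ and dividing by $\lambda^{k(1+\alpha)}$ the resulting gradient bound for $v_k$ blows up. Jin--Silvestre's estimate applied to $w+q\cdot x$ produces a constant that grows linearly in $|q|$ and is therefore useless in this regime.

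What is needed — and what the paper supplies in Section~4 — is a Lipschitz estimate for the $q$-parameterized equation \eqref{modhom} whose constant is \emph{independent of $q$} once $|q|$ exceeds a threshold $L_0(p,n)$. This is proved by the Ishii--Lions doubling-of-variables method (Lemma~\ref{liphom1}): the precise point where large $|q|$ enters is \eqref{koivu}, where $|q|$ large keeps $|Dw+q|$ away from $0$ at the touching point, so the operator is evaluated away from its discontinuity. Only after this Lipschitz control is in hand does the equation become a genuine uniformly parabolic equation with H\"older-continuous coefficients, at which point one can run the Jin--Silvestre oscillation improvement (Lemmas~\ref{firstosc}--\ref{osclem}) and the Savin/Wang small-perturbation theorem to get the uniform $C^{1+\beta_1,(1+\beta_1)/2}$ bound of Proposition~\ref{regmodhom}. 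Your proposal as written would have to insert this step; otherwise the "large $\xi_k$" branch does not close, and without it neither does the compactness/contradiction argument (the bounded subsequence case in Lemma~\ref{flatle} still requires the estimate of Proposition~\ref{regmodhom} to hold uniformly over all admissible $q$, not just small $q$).
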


	The proof  is inspired by \cite{silimb}  and involves an improvement of flatness: if $u$ can be approximated by an affine function in a cylinder $Q$, then  we can  find a better approximation in a smaller cylinder and we can iterate the process. In fact, we prove by induction that for
	some $\rho, \alpha \in(0, 1)$ and $C=C(p,n)$, there exists a sequence $q_k$ such that 
	$ \underset{Q_{\rho^k}}{\osc} (u(x,t)-q_k\cdot x)\leq C \rho^{k(1+\alpha)}$. The inductive step is based on proving improvement of flatness  for the rescaled function $w_k=
	\rho^{-k(1+\alpha)}(u(\rho^k x) - q_k\cdot(\rho^k x))$.
	The main task  is then to study the equation satisfied by the deviation of $u$ from a  linear approximation that we denote $w(x,t)=u(x,t)-q\cdot x$ and to obtain
	a local $C^{\beta, \beta/2}$ estimate for $w$ independent of $q$. This will be the purpose of Lemma \ref{compactres}.
	By the Arzel\`a-Ascoli theorem we get compactness which, along with the regularity estimates for the homogeneous equation, we use to improve our approximation of $u$ in a smaller cylinder $Q_r$ by finding a  linear approximation for $w$ (see Lemma \ref{flatle}). Providing a linear approximation for $w$  independently of the value of $q$ is based on a contradiction argument  which requires a uniform $C^{1+\beta, \frac{\beta+1}{2}}$ estimate with respect to $q$ for the associated homogeneous equation. For this purpose, we adapt the strategy of \cite{JinSil2015} once we have obtained a Lipschitz control on $w$ independent of $q$ when $|q|$ is large enough.
	Finally, we prove \eqref{fer} by using Lemma \ref{flatle} and  an iteration in Lemma \ref{lemiter}.
	Once the  regularity with respect to the space  variable is obtained, we use a standard  barrier argument in order to control the oscillation of $u$ and derive  $C^{\frac{1+\alpha}{2}}$ regularity with respect to the time variable in Lemma \ref{timereg}.
	
	Let us also mention that the extremal cases $p=1$ and $p=\infty$ 
	have also received  attention. The case  $p=1$  is known as the  mean curvature flow equation, we refer the reader to the works of Evans and Spruck \cite{spruck}. The normalized infinity Laplacian is related to certain geometric problems and was studied by \cite{juuka,fliu}. We refer to \cite{evgame,kohnserf} for  game theoretic interpretations of these equations for the elliptic case.
	
	  We can adapt some tools from the  regularity theory of fully nonlinear parabolic equations  (ABP estimates, Harnack inequality, H\"older and Lipschitz regularity...). However the $C^{1+\alpha, (1+\alpha)/2}$ regularity results do not seem to be straightforwardly applicable to quasilinear equations  of the type \eqref{normpl} due to the discontinuity of the operator with respect to $Du$.
For classical results on regularity theory for equations in non-divergence form, we refer the reader to Krylov and Safonov \cite{krylov_para_book,krylov1980}, where they  used perturbation techniques in order to get $C^{1+\alpha, \frac{1+\alpha}{2}}$ regularity for fully nonlinear parabolic PDEs under a smallness condition on the solution. We also refer to the works    of L. Wang \cite{LWang1,wang_para_II} where he used compactness arguments in the case where the oscillation of the diffusion term is small enough. For a nice introduction to fully nonlinear parabolic equations, we refer to the  lecture notes of Imbert and Silvestre \cite{cyril_luis}.

	The paper is organized as follows. In Section 2 we present some preliminary tools, in Section 3 we prove Theorem \ref{thm:main1} and in the last section we provide estimates  for the homogeneous equation satisfied by the deviation of  $u$ from a linear function.

	\noindent \textbf{Acknowledgements.} 
	The second author was supported by the Academy of
Finland project \# 260791.
	\section{Preliminaries}

	For $r>0$, denote $Q_r:=B_r(0)\times (-r^2, 0]$ and $Q_r(x,t):= Q_r+(x,t)$ the parabolic cylinders.
	The normalized $p$-Laplacian can be seen as a uniformly parabolic operator in trace form on the set  $\left\{Du\neq 0 \right\}$. Indeed it can be written in the form
	$$\Delta_p ^N u=\tr (A(Du) D^2u)$$
	where 
	$$A(Du)=I+(p-2) \dfrac{Du\otimes Du}{|Du|^2},$$
	satisfies
	$$\lambda |\xi|^2\leq\left\langle  A(Du)\xi, \xi\right\rangle \leq \Lambda |\xi|^2.$$
	
	For $p>1$, we denote by $\Lambda$ and $\lambda$ the ellipticity constants of the normalized $p$-Laplacian $\Delta^N_p$. It is easy to  see that $\Lambda=\max (p-1, 1)$ and $\lambda=\min (p-1, 1)$.\\
	We denote by $\mathcal{S}^n$ the set of symmetric $n\times n$ matrices. For $a,b\in \R^n$, we denote by $a\otimes b$ the $n\times n$-matrix for which $(a\otimes b)_{ij}=a_i b_j$.\\
	We will use the \emph{Pucci operators}
	\[
	P^+(X):=\sup_{A\in \mathcal{A}_{\lambda,\Lambda}} -\tr(AX)
	\]
	and
	\[
	P^-(X):=\inf_{A\in \mathcal{A}_{\lambda,\Lambda}} -\tr(AX),
	\]
	where $\mathcal{A}_{\lambda,\Lambda}\subset \mathcal{S}^n$ is the  set of symmetric $n\times n$ matrices whose eigenvalues belong to $[\lambda,\Lambda]$.
	
Since we study  H\"older and $C^{1+\alpha, (1+\alpha)/2}$ regularity in parabolic cylinders $Q_r$, for $\alpha\in (0,1)$,  we will use the notation
	\[
	[u]_{C^{\alpha, \alpha/2}(Q_r)}:=\sup_{\substack{(x,t),(y,s)\in Q_r,\\
			(x,t)\neq (y,s)}} \dfrac{|u(x,t)-u(y,s)|}{|x-y|^\alpha+|t-s|^{\frac\alpha 2}},
	\]
	\[
	\norm{u}_{C^{\alpha, \alpha/2}(Q_r)}:=\norm{u}_{L^\infty(Q_r)}+[u]_{C^{\alpha, \alpha/2}(Q_r)}
	\]
	for H\"older continuous functions.

We define the space $C^{1+\alpha, (1+\alpha)/2}(Q_r)$  as the space of all functions with finite norm 
\[
	\norm{u}_{C^{1+\alpha, (1+\alpha)/2}(Q_r)}:=\norm{u}_{L^\infty(Q_r)}+\norm{Du}_{L^\infty(Q_r)}+[u]_{C^{1+\alpha, (1+\alpha)/2}(Q_r)},
	\]
	where
		\begin{align*}
[u]_{C^{1+\alpha,(1+\alpha)/2}(Q_r)}:&=\sup_{\substack{(x,t), (y,s)\in Q_r\\(x,t)\neq (y,s)}}\dfrac{|D u(x,t)-D u(y, s)|}{|x-y|^\alpha+|t-s|^{\frac\alpha 2}}\\
	&\quad+\sup_{\substack{(x,t),(x,s)\in Q_r,\\ t\neq s}}\,\frac{|u(x,t)-u(x,s)|}{|t-s|^{\frac{1+\alpha}{2}}}.
\end{align*}
	In this paper $C$ and $\tilde C$
will denote generic constants which may change from line to line.

	The normalized $p$-Laplacian is undefined when $D u=0$,
	where  it has  a bounded discontinuity. This difficulty can be resolved by  adapting
	the notion of viscosity solution using the upper and
	lower semicontinuous envelopes (relaxations) of the operator, see \cite{crandall1992user}.
	
	\begin{definition}
		Let $\Om$ be a bounded domain, $T>0$, $1<p<\infty$ and $f\in C(\Om_T)$. An upper semicontinuous function $u$ is  a viscosity subsolution of \eqref{normpl}
		if for all $(x_0,t_0)\in\Om_T$  and $\varphi\in C^2(\Om_T)$ such that $u-\varphi$ attains a local maximum at $(x_0, t_0)$, one has
		$$
		\left\{
		\begin{split}
		\varphi_t(x_0,t_0)-\Delta_p^N \varphi(x_0,t_0)\leq f(x_0,t_0),\, &\quad
		\text{if}\,\, D \varphi(x_0,t_0)\neq 0,\\
		\varphi_t(x_0,t_0)-\Delta\varphi(x_0,t_0)-(p-2)\lambda_{max}& (D^2\varphi(x_0,t_0))\leq f(x_0,t_0),\\
		&\quad\text{if}\,\, D\varphi(x_0,t_0)=0\,\,\text{and}\,\, p\geq 2,\\
		\varphi_t(x_0, t_0)-\Delta\varphi(x_0,t_0)-(p-2)\lambda_{min}& (D^2\varphi(x_0,t_0))\leq f(x_0,t_0),\\
		&\quad\text{if}\,\, D \varphi(x_0,t_0)=0, \, 1<p< 2.\\
		\end{split}\right.
		$$
		A lower semicontinuous function $u$ is a viscosity supersolution of \eqref{normpl}
		if for all $(x_0,t_0)\in\Om_T$  and $\varphi\in C^2(\Om_T)$ such that $u-\varphi$ attains a local minimum at $(x_0, t_0)$, one has
		
		$$	\left\{
		\begin{split}
		\varphi_t(x_0,t_0)-\Delta_p^N \varphi(x_0,t_0)\geq f(x_0,t_0),\, &\quad
		\text{if}\,\, D \varphi(x_0,t_0)\neq 0,\\
		\varphi_t(x_0,t_0)-\Delta\varphi(x_0,t_0)-(p-2)\lambda_{min}& (D^2\varphi(x_0,t_0))\geq f(x_0,t_0),\\
		&\quad\text{if}\,\, D\varphi(x_0,t_0)=0\,\,\text{and}\,\, p\geq 2,\\
		\varphi_t(x_0, t_0)-\Delta\varphi(x_0,t_0)-(p-2)\lambda_{max}& (D^2\varphi(x_0,t_0))\geq f(x_0,t_0),\\
		&\quad\text{if}\,\, D \varphi(x_0,t_0)=0, \, 1<p<2.
		\end{split}\right.$$
		We say that $u$ is a viscosity solution of \eqref{normpl} in $\Om_T$ if it is both a viscosity sub- and
		supersolution.
	\end{definition}

	In Sections 3 and 4 we will need the following lemma which allows to control the oscillation in space-time  for a solution  of a uniformly parabolic  equation with bounded discontinuities, once its oscillation in space is controlled in every time slice (see \cite[Lemma 4.3]{JinSil2015} and  \cite[Lemma 9.1]{barbito}).
	The viscosity solution to \eqref{modanot} is defined  analogously  to \eqref{normpl}.
	
	\begin{lemma}
		\label{lem:space2time}
		Let $v\in C(\ol Q_r)$  be a viscosity solution to 
		\begin{equation}\label{modanot}
		v_t-\Delta v-(p-2) \left\langle D^2v\frac{Dv +a}{\abs{Dv+a}}, \frac{Dv+a}{\abs{Dv+a}}\right\rangle=f\qquad\text{in}\quad Q_r,
		\end{equation}
		where $a\in \R^n$ and $f\in L^{\infty}(Q_r)\cap C(Q_r)$.
		Suppose that for all $t\in[-r^2, 0]$
		\[
		\begin{split}
		\underset{B_r}{\osc}\, v(\cdot, t)\le A,
		\end{split}
		\]
		then 
		\[
		\begin{split}
		\underset{Q_r}{\osc}\,v\le C(n,p)A+4r^2\norm{f}_{L^{\infty}(Q_r)}.
		\end{split}
		\]
	\end{lemma}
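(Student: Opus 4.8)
\emph{Proof idea.} The plan is to normalize the cylinder, replace the equation by two Pucci inequalities, reduce the whole statement to the oscillation in time of $v$ along the single vertical segment $\{x=0\}\times[-r^2,0]$, and then control that one–dimensional oscillation by an explicit barrier comparison carried out on short time subintervals which are afterwards telescoped. First I would normalize: the parabolic scaling $v(x,t)\mapsto v(rx,r^2t)$ turns \eqref{modanot} into the same equation with $a$ replaced by $ra$ and $f$ by $r^2f(rx,r^2t)$, preserves the hypothesis $\osc_{B_1}v(\cdot,t)\le A$, and multiplies $\norm{f}_{L^\infty}$ by $r^2$, so it suffices to treat $r=1$. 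Next, since $I+(p-2)\xi\otimes\xi/\abs{\xi}^2\in\mathcal{A}_{\lambda,\Lambda}$ for $\xi\neq0$ and since the matrices $I+(p-2)e\otimes e$ (with $e$ a unit eigenvector of $D^2\varphi$) appearing in the relaxed equation at $\{D\varphi+a=0\}$ also have all eigenvalues in $[\lambda,\Lambda]$, any solution of \eqref{modanot} is at once a viscosity subsolution of $v_t+P^-(D^2v)=\norm{f}_{L^\infty}$ and a viscosity supersolution of $v_t+P^+(D^2v)=-\norm{f}_{L^\infty}$; from here on only these two inequalities are used, and $a$ plays no further role. Setting $\phi(t):=v(0,t)$, the hypothesis gives $\abs{v(x,t)-\phi(t)}\le\osc_{B_1}v(\cdot,t)\le A$, hence $\osc_{Q_1}v\le\osc_{[-1,0]}\phi+2A$, and it remains to bound $\osc_{[-1,0]}\phi$.

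The core is a barrier estimate. Fix $-1\le t_0<t_1\le0$ and put $\Theta:=\sup_{[t_0,t_1]}\phi-\phi(t_0)\ge0$. I would compare $v$ on $B_1\times(t_0,t_1]$ with
\[
w(x,s):=\phi(t_0)+A+\Theta\abs{x}^2+\big(\norm{f}_{L^\infty}+2n\Lambda\Theta\big)(s-t_0),
\]
which is a supersolution because $w_s+P^-(D^2w)=\norm{f}_{L^\infty}+2n\Lambda\Theta+P^-(2\Theta I)=\norm{f}_{L^\infty}$, and which dominates $v$ on the parabolic boundary: on the bottom $w\ge\phi(t_0)+A\ge v$, and on the lateral part $w\ge\phi(t_0)+\Theta+A=\sup_{[t_0,t_1]}\phi+A\ge\phi(s)+A\ge v$, each time using $\abs{v-\phi(\cdot)}\le A$ on the corresponding time slice. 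The comparison principle for the uniformly parabolic operator $u\mapsto u_t+P^-(D^2u)$ then yields $v\le w$, and evaluation at $(0,t_1)$ gives
\[
\phi(t_1)-\phi(t_0)\le A+2n\Lambda\,\Theta\,(t_1-t_0)+\norm{f}_{L^\infty}(t_1-t_0).
\]
Running the mirror argument with the subsolution $\phi(t_0)-A-\Theta'\abs{x}^2-(\norm{f}_{L^\infty}+2n\Lambda\Theta')(s-t_0)$, where $\Theta':=\phi(t_0)-\inf_{[t_0,t_1]}\phi$, and the second Pucci inequality, gives the reverse one–sided bound.

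It then remains to iterate. On any interval $I\subset[-1,0]$ of length $\ell$ the two bounds give $\abs{\phi(t_1)-\phi(t_0)}\le A+2n\Lambda\ell\,\osc_I\phi+\ell\norm{f}_{L^\infty}$ for all $t_0<t_1$ in $I$; taking the supremum over such $t_0,t_1$ and choosing $\ell\le(4n\Lambda)^{-1}$ one absorbs the $\osc_I\phi$ term and obtains $\osc_I\phi\le2A+2\ell\norm{f}_{L^\infty}$. Splitting $[-1,0]$ into $k=\lceil4n\Lambda\rceil$ consecutive intervals of equal length and adding the oscillations along the shared endpoints gives $\osc_{[-1,0]}\phi\le2kA+2\norm{f}_{L^\infty}$, whence $\osc_{Q_1}v\le(2k+2)A+2\norm{f}_{L^\infty}$; undoing the scaling and recalling $\Lambda=\max(p-1,1)$ yields the statement, in fact with $2r^2$ (hence certainly $4r^2$) in front of $\norm{f}_{L^\infty(Q_r)}$ and $C(n,p)=2\lceil4n\max(p-1,1)\rceil+2$.

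The one genuinely delicate point will be the lateral boundary $\partial B_1\times(t_0,t_1]$ in the comparison step: nothing sharper than $v\le\phi(s)+A\le\sup_{[t_0,t_1]}\phi+A$ is available there, which forces the barrier to carry the term $\Theta\abs{x}^2$ with $\Theta\le\osc_{[t_0,t_1]}\phi$. This is affordable only because the resulting time–penalty $-P^-(2\Theta I)=2n\Lambda\Theta$ in the differential inequality is weighted by $t_1-t_0$, so that on a short enough interval the $\osc_I\phi$–feedback can be absorbed; the unavoidable price is the iteration over $\sim n\Lambda$ steps, which is precisely what makes the final constant depend on $n$ and $p$.
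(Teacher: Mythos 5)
Your proof is correct and takes a genuinely different route from the paper's. The paper builds a single pair of global barriers $\bar v$, $\underline v$ on all of $Q_r$ -- a linear-in-$t$, quadratic-in-$x$ ansatz with time slope $2\norm{f}_{L^\infty}+5An\Lambda/r^2$ -- pinned to $v$ at the bottom face $t=-r^2$, and then argues by contradiction directly from the viscosity definition: at a putative interior extremum of $\bar v-v$ the chosen slope dominates the at most $4n\Lambda A/r^2$ contribution from the spatial part, while the $2Ar^{-2}|x|^2$ term together with the spatial-oscillation hypothesis rules out the extremum lying on $\partial B_r$. You instead pass to the two Pucci inequalities, reduce the claim to controlling $\osc_{[-r^2,0]}\phi$ for $\phi(t)=v(0,t)$, run a barrier comparison on time intervals of length $\le (4n\Lambda)^{-1}$ whose lateral penalty is keyed to the as-yet-unknown $\osc_I\phi$, absorb that self-referential term by shortness of the interval, and telescope over $O(n\Lambda)$ slabs. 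Both routes yield $C(n,p)=O(n\Lambda)$ (the paper's explicit constant is $10n\Lambda+5$; yours is $2\lceil 4n\Lambda\rceil+2$), and both are valid. The paper's version is a one-shot argument that avoids invoking the comparison principle abstractly -- it checks the viscosity inequality by hand at the extremum -- whereas your version is more modular, makes transparent where the factor $n\Lambda$ accumulates (one unit per time slab), and slightly sharpens the $f$-term to $2r^2\norm{f}_{L^\infty(Q_r)}$. Just be sure, when you cite the comparison principle for $u\mapsto u_t+P^-(D^2u)$, to note that it suffices here in the easy form "viscosity subsolution vs.\ smooth classical supersolution," which is all the barrier step needs.
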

	\begin{proof}
		Denote $\Lambda=\max(1,p-1)$ and define $$\bar v(x,t):= \bar{\eta}+\left(2\norm{f}_{L^{\infty}(Q_r)}+\dfrac{5An\Lambda  }{r^2}\right)t+2Ar^{-2}|x|^2$$
		where $\bar{\eta}$ is chosen so that
		$\bar{v}(\cdot,-r^2)\geq v(\cdot,-r^2)$ and $\bar{v}(\bar{x},-r^2)=v(\bar{x}, -r^2)$
		for some $\bar x \in \bar B_r$. 
		Notice that $\bar{x}$ must belong to $B_r$  since otherwise  we would have
		\begin{align*}
		2A=\bar{v}(\bar x,-r^2)-\bar{v}(0,-r^2)&\leq v(\bar x,-r^2)-v(0,-r^2)\\&\leq \underset{B_r}{\osc} \,v(\cdot,-r^2)\leq A,
		\end{align*}
		which is impossible. Now we claim that $\bar{v}\geq v$ in $Q_r$. We argue by contradiction. If this is not true, set $m=-\underset{Q_r}{\inf} (\bar{v}-v)>0$. Let $(\hat x,\hat t)\in\overline{ Q_r}$ be such that $m=v(\hat x, \hat t)-\bar{v}(\hat x, \hat t )$. By assumption, we have that $\underset{B_r}{\osc}\, v(\cdot, \hat t)\leq A$. 
		Since by construction, $ \bar{v}(\bar x,-r^2)=v(\bar x, -r^2)$, we have $\hat t>-r^2$. Noticing that $\bar{v}+m\geq  v$ and $\bar{v}(\hat x, \hat t)+m=u(\hat x, \hat t)$, then arguing as above, we see that $\hat x\in B_r$. Since $v$ is a viscosity solution of \eqref{modanot} and $\bar v$ is a smooth  test function, 
		it follows by definition  that if $D\bar v(\hat x, \hat t)+a\neq 0$, we have
		\begin{align*}
		&\norm{f}_{L^{\infty}(Q_r)}+\dfrac{5An\Lambda}{r^2}\leq \partial_t \bar v(\hat x, \hat t)-f(\hat x, \hat t)\\
		&\qquad\quad\leq \Delta \bar v(\hat x, \hat t)+(p-2) \left\langle D^2 \bar v(\hat x , \hat t)\frac{D \bar v(\hat x, \hat t) +a}{\abs{D\bar v(\hat x,\hat t)+a}}, \frac{D \bar v(\hat x, \hat t)+a}{\abs{D\bar v(\hat x, \hat t)+a}}\right\rangle\\
		&\qquad\quad\leq \dfrac{4n\Lambda A}{r^2},
		\end{align*}
		and if  $D\bar v(\hat x, \hat t)+a= 0$, then
		\begin{align*}
		\norm{f}_{L^{\infty}(Q_r)}+\dfrac{5An\Lambda}{r^2}&\leq \partial_t \bar v(\hat x, \hat t)-f(\hat x, \hat t)\\
		&\leq \Delta \bar  v(\hat x, \hat t)+(p-2) \lambda_{\max}(D^2 \bar v(\hat x , \hat t))\\
		&\leq \dfrac{4n\Lambda A}{r^2},
		\end{align*}
		which is impossible.
		Using similar arguments, we can show that for some suitable $\underline{\eta}$ the function  $$\underline{v}(x,t):=\underline{\eta}- \left(2\norm{f}_{L^{\infty}(Q_r)}+\dfrac{5An\Lambda}{r^2}\right)t-2Ar^{-2}|x|^2$$ satisfies
		$\underline v(\cdot, -r^2)\leq v(\cdot,-r^2)$ and $\underline v(\underline x, -r^2)=v(\underline x,-r^2)$ for some $\underline x\in B_r$ and hence
		$$\underline{v}\leq v\in Q_r.$$
		Moreover, since $\bar v(\bar x, -r^2)-\underline v(\underline x, -r^2)\leq \underset{B_r}{\osc}\, v(\cdot, -r^2)\leq A$, we have 
		$$\bar\eta-\underline{\eta}\leq (10n \Lambda+1) A+4r^2\norm{f}_{L^{\infty}(Q_r)}$$
		and it follows that 
		$$\underset{Q_r}{\osc}\,  v\leq\underset{Q_r}{\sup}\, \bar v-\underset{Q_r}{\inf}\,  \underline{v}\leq \bar\eta-\underline{\eta}+4A\leq (10n\Lambda +5)A +4r^2\norm{f}_{L^\infty(Q_r)}.\qedhere$$	
	\end{proof}

	\section{Proof of Theorem \ref{thm:main1}}
	In this section we prove Theorem \ref{thm:main1}. We assume that $p>1$ and $f\in L^{\infty}(\Omega_T)\cap C(\Omega_T)$
	and want to show that there exists $\alpha=\alpha(p,n)>0$ such that any viscosity solution $u$ to
	equation \eqref{normpl} is of class $C^{1+\alpha. \frac{\alpha+1}{2}}_{loc}(\Omega_T)$.

	\subsection{Reduction of the problem and preliminary lemmas}
	First we reduce the problem by rescaling. For $\eps_0>0$,  let $$\kappa=(2\norm{u}_{L^{\infty}(\Om_T)}+\eps_0^{-1}\norm{f}_{L^{\infty}(\Om_T)})^{-1}.$$ Setting $\tilde{u}=\kappa u$, then $\tilde{u}$ satisfies
	\begin{equation*}
	\partial_t \tilde{u}-\Delta_p^N\left(\tilde{u}\right)=\tilde{f},
	\end{equation*}
	where $\tilde f:=\kappa f$ and $\norm{\tilde{u}}_{L^{\infty}(\Om_T)}\leq \frac{1}{2}$ and $\norm{\tilde{f}}_{L^{\infty}(\Om_T)}\le \eps_0$. Hence, without loss of generality we  may assume in Theorem \ref{thm:main1} that  $\norm{u}_{L^{\infty}(\Om_T)}\leq 1/2$ and $\norm{f}_{L^{\infty}(\Om_T)}\leq \eps_0$, where $\eps_0=\eps_0(p,n)$ is chosen later. Next, we notice that it is sufficient to prove that there exists some constant $C=C(p,n)>0$ such that for fixed $(x,t)\in\Om\times (0, T)$ and small enough $r\in(0,1)$, there exists $q=q(r,x,t)\in\R^n$ for which
	$$\underset{Q_r(x,t)}{\osc} \,(u(y,s)-u(x,t)-q\cdot(x- y))\leq Cr^{1+\a}.$$
	It suffices to choose a suitable $\rho\in(0,1)$ such that the previous inequality holds true for $r=r_k=\rho^k$,  $q=q_k$  and $C=1$ by proceeding by induction on $k\in\N$. Using a covering of  $\Omega_T$ by cubes  $Q_r(x,t)$ where $x\in \Omega$, $t\in (0,T)$ and $r<\text{dist}\, ((x,t),\partial \Omega_T)$, we may work on cubes $Q_r(x,t)$. By translation, it is enough to show that the solution is  $C^{1+\alpha, \frac{\alpha+1}{2}}$ at 0, and by considering 
	\[
	u_r(y, s)=r^{-2}u(x+ry, t+r^2s),
	\]
	we may work on the unit parabolic cylinder $Q_1(0,0)$. Finally, considering $u(x,t)-u(0,0)$ if necessary, we may suppose that $u(0,0)=0$.\\

	Notice that  if $w(x,t)=u(x,t)-q\cdot x$, then $w$ satisfies
	\begin{equation}\label{rescpb}
	\partial_t w-\Delta w-(p-2) \left\langle D^2w\frac{Dw+q}{\abs{Dw+q}}, \frac{Dw+q}{\abs{Dw+q}}\right\rangle=f\quad\text{in}\quad Q_1.
	\end{equation}
	In order to prove Theorem \ref{thm:main1}, we will first need the following  equi-continuity lemma.
	\begin{lemma}\label{compactres}
		For all $r\in (0,1)$, there exist $\beta=\beta(p,n)\in(0,1)$ and $C=C(p,n,r)>0$ such that any viscosity solution $w$ of \eqref{rescpb} with $\underset{Q_1}{\osc w}\leq 1$ and $\norm{f}_{L^{\infty}(Q_1)}\leq 1$ satisfies 
		\begin{equation}
		\norm{w}_{C^{\beta, \beta/2}( Q_r)}\leq C.
		\end{equation}
	\end{lemma}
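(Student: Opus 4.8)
The plan is to derive a uniform interior $C^{\beta,\beta/2}$ estimate for $w$ by treating \eqref{rescpb} as a uniformly parabolic equation with bounded discontinuities and applying the Krylov--Safonov machinery, with the crucial point being that the constants can be chosen independently of $q\in\R^n$.

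First I would observe that, wherever $Dw+q\neq 0$, the operator in \eqref{rescpb} has the form $\tr(A(Dw+q)D^2w)$ with $A(\xi)=I+(p-2)\xi\otimes\xi/|\xi|^2$, so its eigenvalues lie in $[\lambda,\Lambda]$ with $\lambda=\min(p-1,1)$, $\Lambda=\max(p-1,1)$, \emph{uniformly in $q$}; at points where $Dw+q=0$ the relaxed definition of viscosity solution still pins the operator between the extremal Pucci-type expressions with the same ellipticity constants. Consequently $w$ is simultaneously a viscosity subsolution of $w_t + P^-(D^2w)\le f$ and a supersolution of $w_t + P^+(D^2w)\ge f$ in the sense of Pucci operators with constants $\lambda,\Lambda$ depending only on $p$. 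These are exactly the hypotheses under which the parabolic Krylov--Safonov theory (see e.g.\ \cite{krylov_para_book,LWang1,cyril_luis}) yields an interior H\"older estimate: there is $\beta=\beta(p,n)\in(0,1)$ and $C=C(p,n,r)$ with
\[
[w]_{C^{\beta,\beta/2}(Q_r)}\le C\bigl(\osc_{Q_1}w+\norm{f}_{L^\infty(Q_1)}\bigr)\le 2C.
\]
Combined with $\osc_{Q_1}w\le 1$ (which bounds $\norm{w}_{L^\infty(Q_r)}$ up to a harmless additive constant once we normalize, say, $w$ at one point, or simply note $[w]$ plus one point value controls the sup on the compact set $Q_r$), this gives $\norm{w}_{C^{\beta,\beta/2}(Q_r)}\le C(p,n,r)$.

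The step I expect to be the main obstacle is verifying that the Krylov--Safonov H\"older estimate genuinely applies to the \emph{relaxed} viscosity notion used here, i.e.\ that the discontinuity of the operator at $\{Dw+q=0\}$ does not spoil the argument. The resolution is that the standard Harnack/oscillation-decay proof only ever uses that $w$ is squeezed between the two Pucci inequalities $w_t+P^-(D^2w)\le f$ and $w_t+P^+(D^2w)\ge f$ — and the semicontinuous-envelope formulation of the definition (the second and third cases in the Definition) is precisely designed so that this squeezing holds at \emph{every} point, including where the gradient-plus-$q$ vanishes. One should state this reduction carefully, perhaps as a short preliminary remark: for any $C^2$ test function $\varphi$ touching $w$ from above at $(x_0,t_0)$, if $D\varphi(x_0,t_0)+q\neq0$ then $\varphi_t-\tr(A(D\varphi+q)D^2\varphi)\le f$ and hence $\varphi_t+P^-(D^2\varphi)\le f$ since $A(D\varphi+q)\in\mathcal A_{\lambda,\Lambda}$; if $D\varphi(x_0,t_0)+q=0$ then the relaxed inequality $\varphi_t-\Delta\varphi-(p-2)\lambda_{\max}(D^2\varphi)\le f$ (for $p\ge2$, resp.\ $\lambda_{\min}$ for $p<2$) again implies $\varphi_t+P^-(D^2\varphi)\le f$ because the operator $X\mapsto -\Delta X-(p-2)\lambda_{\max}(X)$ dominates $P^-$. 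The supersolution case is symmetric. Once this uniform-in-$q$ two-sided Pucci trapping is in hand, the lemma follows directly from the cited regularity theory, and the constants manifestly depend only on $p$, $n$, and $r$.
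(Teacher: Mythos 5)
Your proof is correct and takes essentially the same approach as the paper: trap $w$ between the two Pucci extremal inequalities $\partial_t w + P^-(D^2 w) \le |f|$ and $\partial_t w + P^+(D^2 w) \ge -|f|$, uniformly in $q$, and then invoke the parabolic Krylov--Safonov theory from \cite{krylov1980,LWang1,cyril_luis}. The paper is terser at the point where $Dw+q=0$, but your careful check that the relaxed viscosity inequalities still imply the Pucci trapping there is precisely the justification the paper implicitly relies on.
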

	
	\begin{proof}
		Equation \eqref{rescpb}  can be rewritten as $$\partial_t w-\tr{\left( \left(I+(p-2)\dfrac{D w+q}{|D w+q|}\otimes\dfrac{D w+q}{|D w+q|}\right) D^2 w\right)}=f.$$
		Recalling the  definitions for Pucci operators $P^+$ and $P^-$ respectively, the equation takes the form
		$$\left\{\begin{array}{ll}\partial_tw +P^+(D^2w)+|f|\geq 0\\
		\partial_t w +P^-(D^2w)-|f|\leq 0\end{array}\right..$$
		By the classical results of  \cite{krylov1980,LWang1} (see also \cite{cyril_luis}), there exist $\beta=\beta(p,n)\in(0,1)$ and $C_\beta=C(r,p,n)$  such that
		\[
		\norm{w}_{C^{\beta, \beta/2}( Q_r)}\leq  C_\beta\left(\underset{Q_1}{\osc}\, w+\norm{f}_{L^{n+1}(Q_1)}\right)\leq C_\beta.\qedhere
		\] 
	\end{proof}
	
	Our next step consists in showing a linear approximation result for solutions to equation \eqref{rescpb}. We proceed by contradiction, using the previous proposition together with  known regularity results for uniformly parabolic linear  PDEs and the following  result for the homogeneous equation associated to \eqref{rescpb}.
	For convenience, we postpone the technical proof of Proposition \ref{regmodhom} and present it in the last  section.
	\begin{proposition}\label{regmodhom}
		Assume that $f\equiv0$ and let $w$ be a viscosity solution to equation \eqref{rescpb} with $\underset{Q_1}{\osc{w}}\leq 1$. For all $r\in (0,\frac34)$, there exist constants $ C_0=C_0(p,n)>0$  and $\beta_1=\beta_1(p,n)>0$ such that 
		\begin{equation}
		\begin{split}
		\norm{w}_{C ^{1+\beta_1, (1+\beta_1)/2}( Q_{r})}\leq  C_0.
		\end{split}
		\end{equation}\end{proposition}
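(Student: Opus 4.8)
The plan is to prove Proposition~\ref{regmodhom} by distinguishing two regimes according to the size of $|q|$, mirroring the two-scale structure of the operator in \eqref{rescpb}. When $|q|$ is \emph{large} (say $|q|\ge C_*$ for a dimensional threshold to be fixed), the gradient term $Dw+q$ cannot vanish as long as $w$ is $1/2$-Lipschitz, so \eqref{rescpb} with $f\equiv0$ is a genuine smooth uniformly parabolic equation of the form $w_t=\tr(A(Dw)D^2w)$ with $A$ smooth near the unit vector $q/|q|$; here one can freeze coefficients and invoke classical $C^{1+\beta_1,(1+\beta_1)/2}$ interior estimates for uniformly parabolic equations (Krylov--Safonov plus Schauder-type bootstrapping, as in \cite{JinSil2015,krylov_para_book}), obtaining a bound independent of $q$ once the Lipschitz control $\norm{Dw}_{L^\infty(Q_{7/8})}\le 1$ is in hand. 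When $|q|$ is \emph{bounded}, $|q|\le C_*$, the equation \eqref{rescpb} is a uniformly parabolic equation with a bounded-discontinuity nonlinearity in $Dw$, and it is essentially of the same type as the original normalized $p$-Laplacian after a bounded shift of the gradient; the desired estimate then follows from the Jin--Silvestre argument \cite{JinSil2015} applied to this shifted operator, uniformly in $q$ over the compact set $\{|q|\le C_*\}$.

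The first genuine step, therefore, is to establish a \textbf{Lipschitz bound for $w$ that is uniform in $q$}. By Lemma~\ref{compactres} we already have a uniform $C^{\beta,\beta/2}(Q_{3/4})$ bound. To upgrade to Lipschitz, I would use the standard technique of estimating $\sup_{x,y}\bigl(w(x,t)-w(y,t)-L\omega(|x-y|)\bigr)$ for a suitable concave modulus $\omega$ and a large constant $L$, together with the maximum principle / sup-convolution doubling-variables argument adapted to viscosity solutions of uniformly parabolic equations with measurable dependence on the gradient; alternatively one can differentiate the equation in the large-$|q|$ case and use the Bernstein method, but the doubling argument is cleaner since it does not require regularity of $w$ and works uniformly in $q$ because $A$ is uniformly elliptic with constants $\lambda,\Lambda$ independent of $q$. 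The point is that the source term is zero and $w$ is already bounded, so a clean interior Lipschitz estimate $\norm{Dw}_{L^\infty(Q_{7/8})}\le C(p,n)$ is available.

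Once the uniform Lipschitz bound is in place, the large-$|q|$ case is routine: for $|q|\ge 2$, say, $|Dw+q|\ge |q|-\norm{Dw}_\infty\ge 1$ on $Q_{7/8}$, the matrix $A\bigl(\tfrac{Dw+q}{|Dw+q|}\bigr)=I+(p-2)\tfrac{(Dw+q)\otimes(Dw+q)}{|Dw+q|^2}$ is a smooth function of $Dw$ with derivatives bounded independently of $q$ (because the map $\xi\mapsto \xi\otimes\xi/|\xi|^2$ has bounded derivatives away from the origin, and here $|Dw+q|\ge1$), hence classical interior parabolic $C^{1,\alpha}$ estimates apply with constants depending only on $p,n$. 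For $|q|\le 2$ one absorbs $q$ into the gradient: $\tfrac{Dw+q}{|Dw+q|}$ is exactly the discontinuous coefficient already handled in \cite{JinSil2015} up to a translation in the gradient variable, and since the class $\{|q|\le2\}$ is compact the constant in their estimate can be taken uniform. Patching the two cases and covering $Q_r$ for $r<3/4$ by slightly larger subcylinders gives the stated bound with $C_0=C_0(p,n)$ and $\beta_1=\beta_1(p,n)$.

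The \textbf{main obstacle} I anticipate is obtaining the $q$-independence throughout: the naive interior estimates degenerate as $|q|\to\infty$ if one is not careful (the ``ellipticity'' of $A(Dw)$ is fine, but the oscillation of $A$ as a function of $x$ through $Dw$ must be controlled, and a priori $Dw$ could be as large as the Lipschitz bound allows), and in the transition region $|q|\sim 1$ the gradient $Dw+q$ can actually vanish, so one genuinely needs the bounded-discontinuity viscosity machinery rather than smooth coefficient theory. The resolution is precisely the dichotomy above: choosing the threshold for $|q|$ large enough (in terms of the uniform Lipschitz bound, which is itself $q$-independent) guarantees non-vanishing of $Dw+q$ in the ``large'' regime, while compactness of the parameter set handles the ``small'' regime. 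This is the step the paper flags as adapting the strategy of \cite{JinSil2015} after securing the Lipschitz control, and it is where the careful tracking of constants must be done.
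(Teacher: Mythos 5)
Your overall roadmap matches the paper's: a dichotomy on $|q|$, a $q$-independent Lipschitz bound proved by the Ishii--Lions doubling-variables method (the paper's Lemma~\ref{liphom1}), the bounded-$|q|$ case handled by applying \cite{JinSil2015} to $v=w+q\cdot x$ (whose oscillation is then bounded by $1+2L_0$), and the large-$|q|$ case handled by a Jin--Silvestre-style argument once $Dw+q$ is kept away from the origin. Where I would push back is the middle step: the assertion that, for $|q|$ large and $|Dw|\le 1$, one can ``freeze coefficients and invoke classical $C^{1+\beta_1,(1+\beta_1)/2}$ interior estimates'' with constants depending only on $p,n$. For the quasilinear non-divergence equation $w_t=\tr\bigl(A(Dw)D^2w\bigr)$, Krylov--Safonov only yields a $C^{\beta,\beta/2}$ estimate for $w$ itself; it does not give a $C^{1+\alpha}$ estimate for the gradient, and the dependence on $Dw$ in the coefficient matrix prevents a direct Schauder argument unless one already controls the oscillation of $Dw$. (Applying \cite{JinSil2015} to $v$ does give $C^{1+\beta_1}$, but with a constant that depends on $\|v\|_{L^\infty}$ and hence blows up with $|q|$ -- which is the whole issue.) The paper does not have such a shortcut available and instead re-runs the Jin--Silvestre iteration from scratch for the shifted operator: Lemma~\ref{firstosc} and Corollary~\ref{iterlem} (improvement of oscillation of $Dw\cdot e$), Lemma~\ref{osclem} and Lemma~\ref{lem:line-of-calculus} (closeness to a plane), and Wang's small-perturbation Theorem~\ref{thm:close-to-plane}, carefully checking that each step is uniform once $|Dw|\le 1$ and $|q|>2$. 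You do gesture at this in your ``main obstacle'' paragraph -- you correctly identify that this is where the \cite{JinSil2015} adaptation lives -- so the gap is acknowledged rather than overlooked; but the claim that classical estimates close the large-$|q|$ case as stated is not correct, and the improvement-of-oscillation machinery you defer is the bulk of the proof, not a routine finishing touch.
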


	Now we are in position to prove the following improvement of flatness lemma.
	\begin{lemma}\label{flatle} There exist $\eps_0=\eps_0(p,n)\in(0,1)$ and $\rho=\rho_0(p,n)\in(0,1)$, such that for any $q\in\R^n$ and for any viscosity  solution $w$ of \eqref{rescpb} with $\osc_{Q_1}(w)\leq 1$ and $\norm{f}_{L^{\infty}(Q_1)}\leq \eps_0$, there exists $q'\in\Rn$ with $|q'|\leq \tilde C(p,n)$ such that
		$$\underset{Q_{\rho}}{\osc}\,(w(x,t)-q'\cdot x)\leq \frac{1}{2}\rho.$$
	\end{lemma}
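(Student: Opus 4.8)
The plan is to argue by contradiction and compactness, using the interior estimates from Lemma \ref{compactres} and Proposition \ref{regmodhom}. Suppose the conclusion fails: then for every $k\in\N$ there exist $q_k\in\R^n$, a source term $f_k$ with $\norm{f_k}_{L^\infty(Q_1)}\le 1/k$, and a viscosity solution $w_k$ of \eqref{rescpb} with datum $q_k$ and $\osc_{Q_1}(w_k)\le 1$, such that for \emph{every} $q'\in\R^n$ one has $\osc_{Q_\rho}(w_k(x,t)-q'\cdot x)>\tfrac12\rho$, where $\rho$ is a small radius to be fixed below (the same $\rho$ for all $k$). Normalizing, we may also assume $w_k(0,0)=0$. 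Note the nonlinearity depends on $q_k$ only through the direction-type term $\langle D^2 w\,\tfrac{Dw+q_k}{|Dw+q_k|},\tfrac{Dw+q_k}{|Dw+q_k|}\rangle$, which is a bounded perturbation trapped between the Pucci operators $P^\pm$ regardless of $q_k$; that is precisely why Lemma \ref{compactres} gives a $C^{\beta,\beta/2}(Q_{3/4})$ bound on $w_k$ uniform in $q_k$.

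The next step is to extract a limit and identify the limiting equation. By Lemma \ref{compactres} the family $\{w_k\}$ is bounded in $C^{\beta,\beta/2}(Q_{3/4})$, so by Arzel\`a--Ascoli a subsequence converges locally uniformly in $Q_{3/4}$ to some $w_\infty\in C(Q_{3/4})$ with $\osc_{Q_{3/4}}(w_\infty)\le 1$. For the sequence $q_k$ there are two regimes to handle. If $|q_k|$ stays bounded along a subsequence, pass to a further subsequence with $q_k\to q_\infty$; then by stability of viscosity solutions (the operators converge locally uniformly away from the degeneracy set, and the bounded-discontinuity structure of the operator is handled exactly as in the definition via relaxed limits), $w_\infty$ solves the homogeneous equation \eqref{rescpb} with frozen vector $q_\infty$. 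If instead $|q_k|\to\infty$, then $\tfrac{Dw_k+q_k}{|Dw_k+q_k|}\to$ a fixed direction $e$ wherever $Dw_k$ is controlled, so $w_\infty$ solves the \emph{constant-coefficient} linear uniformly parabolic equation $\partial_t w_\infty-\Delta w_\infty-(p-2)\langle D^2 w_\infty\,e,e\rangle=0$; this is the situation where one needs the Lipschitz control on $w$ independent of $q$ for large $|q|$ alluded to in the introduction, to justify that the limiting equation is the nondegenerate linear one. In either case, $w_\infty$ is a solution of a homogeneous equation to which the interior $C^{1+\beta_1,(1+\beta_1)/2}$ estimate of Proposition \ref{regmodhom} (or classical Schauder for the linear case) applies on, say, $Q_{1/2}$, with a constant $C_0=C_0(p,n)$ independent of the subsequence.

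Now fix the radius. Since $w_\infty$ is $C^{1+\beta_1,(1+\beta_1)/2}$ near the origin with $w_\infty(0,0)=0$, taking $q'_\infty:=Dw_\infty(0,0)$ gives, for $(x,t)\in Q_\rho$,
\[
|w_\infty(x,t)-q'_\infty\cdot x|\le |w_\infty(x,t)-w_\infty(0,t)-q'_\infty\cdot x|+|w_\infty(0,t)-w_\infty(0,0)|\le C_0\rho^{1+\beta_1}+C_0\rho^{1+\beta_1},
\]
so $\osc_{Q_\rho}(w_\infty-q'_\infty\cdot x)\le 4C_0\rho^{1+\beta_1}$. Choose $\rho=\rho(p,n)\in(0,1)$ small enough that $4C_0\rho^{1+\beta_1}\le \tfrac18\rho$, and note $|q'_\infty|=|Dw_\infty(0,0)|\le C_0=:\tilde C(p,n)$. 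By the local uniform convergence $w_k\to w_\infty$ on $\overline{Q_\rho}$, for $k$ large we get $\osc_{Q_\rho}(w_k-q'_\infty\cdot x)\le \tfrac14\rho$, which contradicts $\osc_{Q_\rho}(w_k-q'\cdot x)>\tfrac12\rho$ for all $q'$, in particular for $q'=q'_\infty$. Finally, choosing $\eps_0$: the contradiction argument produces some $\eps_0>0$ for the sequence $1/k$, but more cleanly one runs the same argument directly with $\norm{f}_{L^\infty(Q_1)}\le\eps_0$ and lets $\eps_0\to 0$ along the contradictory sequence, so that the limiting source term vanishes; this fixes $\eps_0=\eps_0(p,n)$.

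The main obstacle is the case $|q_k|\to\infty$: one must show that the rescaled solutions $w_k$ enjoy a Lipschitz (hence compactness) bound uniform in $|q_k|$ \emph{and} that the limiting equation is the nondegenerate linear one with a frozen direction $e$, which requires controlling $Dw_k$ so that $\tfrac{Dw_k+q_k}{|Dw_k+q_k|}\to e$; this is exactly the delicate uniform $C^{1+\beta,(1+\beta)/2}$-type input from Proposition \ref{regmodhom} together with the large-$|q|$ Lipschitz estimate. Everything else—extraction of limits, stability of viscosity solutions under the bounded-discontinuity relaxation, and the final choice of $\rho$ and $\eps_0$—is routine once those uniform estimates are in hand.
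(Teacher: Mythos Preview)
Your argument is essentially the paper's own proof: contradiction plus compactness from Lemma~\ref{compactres}, a dichotomy on whether $(q_k)$ is bounded, identification of the limiting homogeneous equation in each case, and then a choice of $\rho$ using the $C^{1+\beta}$ regularity of $w_\infty$ to reach a contradiction.

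One clarification is worth making. You describe the case $|q_k|\to\infty$ as ``the main obstacle'' and say one needs the large-$|q|$ Lipschitz bound on $w_k$ to justify that the limiting equation is the linear one. In fact no Lipschitz control on $w_k$ is required at this step: viscosity stability is tested against smooth $\varphi$, and for any such $\varphi$ one has $\dfrac{D\varphi+q_k}{|D\varphi+q_k|}=\dfrac{D\varphi/|q_k|+e_k}{|D\varphi/|q_k|+e_k|}\to e_\infty$ once $e_k:=q_k/|q_k|\to e_\infty$, simply because $|D\varphi|$ is bounded. The paper passes to the limit exactly this way. The large-$|q|$ Lipschitz estimate (Lemma~\ref{liphom1}) lives entirely inside the proof of Proposition~\ref{regmodhom}, which you invoke only in the \emph{bounded} $q_k$ case; in the unbounded case the limit equation is linear with constant coefficients and classical interior estimates (e.g.\ \cite[Lemma 12.13]{lieberm}) already give the needed $C^{1+\beta}$ bound on $w_\infty$. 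So the ``delicate'' input is not in the limit passage but in the uniform-in-$q$ estimate of Proposition~\ref{regmodhom} for the bounded case. With that misattribution corrected, your proof goes through as written.
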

	
	\begin{proof}
		Suppose by contradiction that there exist a sequence of functions $(f_j)$ with $\norm{f_j}_{L^\infty(Q_1)} \rightarrow 0$, a sequence of vectors $(q_j)$ and a sequence of viscosity solutions $(w_j)$ with $\osc_{Q_1}{w_j}\leq 1$ to
		\begin{equation*}
		\partial_t w_j-\Delta w_j-(p-2) \left\langle D^2w_j\frac{Dw_j+q_j}{\abs{Dw_j+q_j}}, \frac{Dw_j+q_j}{\abs{Dw_j+q_j}}\right\rangle=f_j\quad\text{in}\quad Q_1,
		\end{equation*}
		such that, for all $q'\in \Rn$ and any $\rho\in(0,1)$
		\begin{equation}\label{contra}
		\underset{Q_{\rho}}{\osc}(w_j(x,t)-q'\cdot x)>\frac{\rho}{2}.
		\end{equation}
		Using the Arzel\`{a}-Ascoli compactness result with  Lemma \ref{compactres}, there exists a continuous function $w_{\infty}$ such that $w_j\to w_{\infty}$ uniformly in $Q_\rho$ for any $\rho\in (0,1)$.
		Passing to the limit in \eqref{contra}, we have that for any vector $q'$,
		\begin{equation}\label{contrafin}
		\underset{Q_{\rho}}{\osc} (w_{\infty}(x,t)-q'\cdot x)>\dfrac{\rho}{2}.
		\end{equation}
		
		Let us first suppose that the sequence $(q_j)$ is bounded. Then, up to a subsequence, it converges to $q_{\infty}$. Using the relaxed limit, we get that   $w_{\infty}$  satisfies
		$$   \partial_t w_\infty-\Delta w_\infty-(p-2) \left\langle D^2w_\infty\frac{Dw_\infty+q_j}{\abs{Dw_\infty+q_\infty}}, \frac{Dw_\infty+q_\infty}{\abs{Dw_\infty+q_\infty}}\right\rangle=0,    $$
		applying  Proposition \ref{regmodhom} we have that,  there exists $C_0=C_0(p,n)>0$ such that
		$$\norm{w_{\infty}}_{C^{1+\beta_1, (1+\beta_1)/2}( Q_{1/2})}\leq C_0.$$
		If the sequence $(q_j)$ is unbounded, take a subsequence, still denoted by $(q_j)$, for which $|q_j|\rightarrow \infty$, and then a converging subsequence from $e_j=\frac{q_j}{|q_j|}$, $e_j\rightarrow e_\infty$. We have that in $Q_1$
		\begin{equation*}\partial_t w_j-\Delta w_j-(p-2) \left\langle  D^2w_j\frac{Dw_j|q_j|^{-1}+e_j}{\abs{Dw_j|q_j|^{-1}+e_j}}, \frac{Dw_j|q_j|^{-1}+e_j}{\abs{Dw_j|q_j|^{-1}+e_j}}\right\rangle=f_j.
		\end{equation*}
		Passing to the limit we obtain 
		\begin{equation}\label{der2}
		\partial_t w_\infty-\Delta w_{\infty}-(p-2) \left\langle D^2w_{\infty}\,e_{\infty}, e_{\infty}\right\rangle=0\qquad\text{in}\quad Q_1,
		\end{equation}
		with $|e_{\infty}|=1$. Noticing that equation \eqref{der2} can be written as
		\[
		\partial_t w_\infty-\tr{((I+(p-2) e_{\infty}\otimes e_{\infty}) D^2w_\infty)}=0,
		\]
		we see that equation \eqref{der2} is linear, uniformly parabolic and depends only on $ \partial_t w_\infty$ and $ D^2 w_\infty$. By the regularity result of \cite[Lemma 12.13]{lieberm}, there is $\beta_2>0$ so that $w_\infty\in C^{1+\beta_2,(1+\beta_2)/2}_\text{loc}$ and the H\"older norm of $Dw_\infty$ is bounded by a constant depending only on $p,n,\norm{w_\infty}_{L^\infty(Q_1)}$ that we still denote by $C_0$.
		
		We have thus shown that $w_\infty\in C^{1+\beta,(1+\beta)/2}_\text{loc}$ for $\beta=\min(\beta_1,\beta_2)>0$ with a H\"older norm independent of the sequence $(q_j)_j$. Choose $\rho\in (0, 1/2)$ such that
		\begin{equation*}
		C_0\rho^{\beta}\leq \frac{1}{4}.
		\end{equation*}
		By $C^{1+\beta,(1+\beta)/2}_\text{loc}$ regularity there exists a vector $k_{\rho}$  with $\norm{k_\rho}\leq \tilde C(p,n)$  (see Proposition \ref{regmodhom}) such that 
		\begin{equation}\label{kilopr}
		\underset{Q_{\rho}}{\osc} (w_{\infty}(x,t)-k_{\rho}\cdot x)\leq C_0\rho^{1+\beta}\leq \frac{1}{4}\rho.
		\end{equation}
		This contradicts \eqref{contrafin}, and the proof is complete.
		The boundedness of $|q'|$  can be obtained as follows.
		Since $w_j$ converges to $w_\infty$, we get that for all $\eps_1>0$ there exists   $\eps_0=\eps_0(p,n)$ sufficiently small such that  if $\norm{f}_{L^\infty (Q_1)}\leq \eps_0$ then
		\begin{equation}\label{kiju}
		\underset{Q_\rho}{\osc}(w-w_\infty)\leq \eps_1.
		\end{equation}
Taking $\eps_1=\frac{\rho}{4}$, we conclude from \eqref{kilopr} and \eqref{kiju}
that for $\eps_0$ sufficiently small, if $\norm{f}_{L^\infty (Q_1)}\leq \eps_0$ then
\begin{equation*}\underset{Q_\rho}{\osc}(w(x, t)-k_\rho\cdot x)\leq \frac{\rho}{2}	.\qedhere\end{equation*}
	\end{proof}
	
	\subsection{Iteration and proof of the main theorem}
$\,$\\
\textbf{H\"older estimate for the gradient in  the space variable}\\
	In order to control the H\"older  continuity of the gradient of a function with respect to the space variable, it is standard to make sure that, around each point, the function can be deviated  from a plane so that its  oscillation in a ball of radius $r>0$ is of order $r^{1+\alpha}$ (see \cite[Lemma 12.12]{lieberm} and Appendix \ref{appen1}).
	
	The H\"older regularity with respect to the space variable stated in  Theorem \ref{thm:main1} is a direct consequence of  the following lemma  and Lemma \ref{lemendi} after scaling back from $\tilde{u}$ to $u$. 
	\begin{lemma}\label{lemiter}
		Assume  that $\rho$, $\eps_0\in(0,1)$ are as in Lemma \ref{flatle} and let $u$ be a viscosity solution of \eqref{normpl} with $\underset{Q_1}{osc}\, u\leq1$ and $\norm{f}_{L^\infty(Q_1)}\leq \eps_0$.
		Then there exists $\a\in(0,1)$ such that, for all $k\in\N$, there exists $q_k\in \R^n$ such that
		\begin{equation}\label{itera}
		\underset{Q_{r_{k}}}{\osc} \, (u(y,t)-q_{k}\cdot y)\leq  r_k^{1+\a },
		\end{equation}
		where $r_k:=\rho^k$.
	\end{lemma}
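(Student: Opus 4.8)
The statement is a clean induction on $k$, and the engine is Lemma \ref{flatle}. I would set up the induction so that at each level I control both the oscillation of $u$ deviated from the affine function $q_k\cdot y$ \emph{and} the size of $q_k$, since the hypothesis $\osc_{Q_1} w \le 1$ in Lemma \ref{flatle} is scale-invariant only if I simultaneously track the growth of the gradients. So the right induction hypothesis is: \eqref{itera} holds, together with $|q_{k+1}-q_k|\le \tilde C \rho^{k\a}$ (equivalently $|q_k|$ stays bounded, or grows slowly), where $\tilde C$ is the constant from Lemma \ref{flatle}.

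First I would fix $\a\in(0,\beta)$ small enough — the precise smallness will be dictated at the end so that a geometric tail sums correctly — and choose $\rho=\rho_0(p,n)$, $\eps_0=\eps_0(p,n)$ as in Lemma \ref{flatle}; note $\a$ may need $\a<\beta_1$ and $\a$ small relative to the $\rho$ produced there, but all of this is fixed before the induction starts. The base case $k=0$ (and $k=1$) is immediate from $\osc_{Q_1} u\le 1$ with $q_0=0$. For the inductive step, assume \eqref{itera} at level $k$ with vector $q_k$. Define the rescaled function
\[
w_k(x,t) := \rho^{-k(1+\a)}\bigl(u(\rho^k x,\rho^{2k} t)-q_k\cdot(\rho^k x)\bigr)\qquad\text{on }Q_1.
\]
A direct computation shows $w_k$ is a viscosity solution of \eqref{rescpb} with drift vector $q = \rho^{-k\a} q_k$ and right-hand side $f_k(x,t) := \rho^{k(1-\a)} f(\rho^k x,\rho^{2k}t)$; since $\a<1$ and $\rho<1$ we have $\norm{f_k}_{L^\infty(Q_1)}\le \rho^{k(1-\a)}\norm{f}_{L^\infty(Q_1)}\le\eps_0$, and by the induction hypothesis $\osc_{Q_1} w_k \le 1$. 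Thus Lemma \ref{flatle} applies to $w_k$: there is $q'$ with $|q'|\le\tilde C(p,n)$ such that $\osc_{Q_\rho}(w_k(x,t)-q'\cdot x)\le \tfrac12\rho$. Scaling back, with $q_{k+1} := q_k + \rho^{k\a} q'$, this reads exactly $\osc_{Q_{\rho^{k+1}}}(u(y,t)-q_{k+1}\cdot y)\le \rho^{(k+1)(1+\a)}$, which is \eqref{itera} at level $k+1$, and $|q_{k+1}-q_k| = \rho^{k\a}|q'|\le \tilde C\rho^{k\a}$, closing the induction. Summing the telescoping bound gives $|q_k|\le \tilde C\sum_{j\ge 0}\rho^{j\a} = \tilde C/(1-\rho^\a) =: C(p,n)$, so all the $q_k$ stay uniformly bounded — which is what makes the repeated application of Lemma \ref{flatle} legitimate (its hypotheses are clean because $\osc_{Q_1} w_k\le 1$ regardless of how large the drift $q$ is).

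The only delicate point — the one I'd be most careful about — is the bookkeeping between the exponent $\a$, the contraction factor $\tfrac12$ in Lemma \ref{flatle}, and the radius $\rho$ it outputs: we need $\tfrac12\rho\le\rho^{1+\a}$, i.e. $\rho^\a\le\tfrac12$, so $\a$ must be chosen as $\a=\log(1/2)/\log(1/\rho)$ (or any smaller positive value), \emph{after} $\rho$ is fixed by Lemma \ref{flatle}. Everything else is the routine scaling computation verifying that $w_k$ solves \eqref{rescpb} with an admissible right-hand side; once that is in place the induction runs mechanically. Finally, as noted in the text, combining \eqref{itera} (which gives the affine-approximation property at every scale around $0$, hence, after translation, around every interior point) with the standard characterization of $C^{1+\a,(1+\a)/2}$ regularity via oscillation decay (cf.\ \cite[Lemma 12.12]{lieberm}) yields \eqref{fer}, completing this part of Theorem \ref{thm:main1}.
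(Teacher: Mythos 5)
Your proof is essentially the paper's argument: rescale to $w_k(x,t)=\rho^{-k(1+\alpha)}\bigl(u(\rho^k x,\rho^{2k}t)-q_k\cdot(\rho^k x)\bigr)$, observe $\norm{f_k}_{L^{\infty}(Q_1)}\le\eps_0$ since $\alpha<1$, apply Lemma \ref{flatle}, and set $q_{k+1}=q_k+\rho^{k\alpha}q'$. One sign slip in the bookkeeping: $\tfrac12\rho\le\rho^{1+\alpha}$ is equivalent to $\rho^\alpha\ge\tfrac12$, not $\rho^\alpha\le\tfrac12$, so the threshold exponent is $\alpha=\log 2/\log(1/\rho)>0$ rather than the (negative) $\log(1/2)/\log(1/\rho)$; your parenthetical ``or any smaller positive value'' already points in the correct direction and matches the paper's choice of $\alpha$ with $\rho^\alpha>\tfrac12$.
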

	\begin{proof}
		For $k=0$, the estimate \eqref{itera} follows from the assumption $\underset{Q_1}{osc}\, u\leq 1$ and $q_0=0$. Next we take $\alpha\in (0,1)$ such that $\rho^{\alpha}>1/2$.
		We assume that $k\geq 0$ and that we constructed already $q_k\in \R^n$ such that \eqref{itera} holds true. 
		To prove the inductive step $k\rightarrow k+1$, we denote $r_k:=\rho^k$ and we rescale the solution considering for $x\in Q_1$
		$$w_k(x,t)=r_k^{-1-\alpha}\big(u(r_k x, r_k^2t)-q_k\cdot (r_k x)\big).$$
		We have by induction assumption, $\underset{Q_1}{\osc}\,(w_k)\leq 1$ and $w_k$ satisfies  
		$$\partial_t w_k-\Delta w_k-(p-2) \left\langle D^2w_k\frac{Dw_k+(q_k/r_k^{\alpha})}{\abs{  Dw_k+(q_k/r_k^{\alpha})}}, \frac{Dw_k+(q_k/r_k^{\alpha})}{\abs{Dw_k+(q_k/r_k^{\alpha})}}\right\rangle=f_k,  $$
		where  $f_k(x,t)=r_k^{1-\alpha}f(r_k x, r_k^2t)$ with $\norm{f_k}_{L^{\infty}(Q_1)}\leq \eps_0$ since $\alpha<1$.
		Using the result of Lemma \ref{flatle}, there exists $l_k\in\R^n$ with $|l_k|\leq \tilde C(p,n)$ such that
		$$\underset{B_{\rho}}{\osc}\,(w_k(x,t)-l_k\cdot x)\leq \frac{1}{2}\rho.$$   
		Setting 
		\begin{equation}\label{merci}
		q_{k+1}=q_k+ l_k r_k^{\a},
		\end{equation}
		 we get
		\[
		\underset{Q_{r_{k+1}}}{\osc} \, (u(x,t)-q_{k+1}\cdot x)\leq \dfrac{\rho}{2} r_k^{1+\a }\leq r_{k+1}^{1+\a}.\qedhere
		\]
	\end{proof}

	\proof[ Proof of estimate \eqref{fer} in  Theorem \ref{thm:main1}:]
	 We show that $q_k$ converges to a vector  $q_\infty$.
	Indeed from \eqref{merci}, we have that for $m\geq k$,
	$|q_m-q_k|\leq C\overset{m-1}{\underset{j=k}{\sum}} r_j^\alpha\leq C \rho^{k\a}$,  where $C=C(p,n)>0$. It follows that $q_k$ converges and 
	$$\sup_{(x,t)\in Q_{r_k}}(q_k\cdot x-q_\infty\cdot x)\leq C \rho^{k(1+\alpha)}, \quad\underset{(x,t)\in Q_{r_k}}{\osc}(u(x,t)-q_k\cdot x)\leq \rho^{k(1+\alpha)}.$$
	 Consequently, we have that 
	 \begin{equation}\label{aminater}
	\sup_{(x,t)\in Q_{r^k}} |u(x,t)-q_\infty \cdot x-u(0,0)|\leq C r_k^{1+\alpha},
	 \end{equation} where $C=C(p,n)$.
	 It follows from Lemma \ref{lemendi} that $Du$ is of class $C^{\alpha, \alpha/2}$ at $(0, 0)$ and we will denote $q_\infty$ by  $Du(0,0)$ in the sequel. The estimate \eqref{fer} in Theorem \ref{thm:main1} follows from translation arguments, estimate \eqref{aminater} and Lemma \ref{lemendi}.\qedhere

	\noindent\textbf{H\"older estimate for the solution in the time variable}
	$\,\,$\\
	Here we show the regularity of $u$  with respect to the time variable to finish the proof of Theorem \ref{thm:main1} by constructing suitable barriers in order to control the oscillation of $u$ (see \cite[Lemma 4.3]{JinSil2015}).
	\begin{lemma}\label{timereg}
	Under the hypothesis of Lemma \ref{lemiter}, there exists 
	$C=C(p,n)>0$ such that for all $t\in (-r^2,0)$, we have
	\begin{equation}
	|u(0, t)-u(0,0)|\leq C|t|^{\frac{1+\alpha}{2}}.
	\end{equation}
	\end{lemma}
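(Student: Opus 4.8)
Here is the plan. The idea is to feed the spatial estimate already obtained into a barrier argument. Recall from the proof of \eqref{fer} that $q_k\to q_\infty=:Du(0,0)$, that $|q_m-q_k|\le C\rho^{k\alpha}$, and that \eqref{aminater} holds. Put
\[
v(x,s):=u(x,s)-u(0,0)-q_\infty\cdot x ,
\]
so that $v(0,0)=0$; by \eqref{aminater} we have $|v|\le Cr_k^{1+\alpha}$ on $Q_{r_k}$, and since every $(x,s)\in Q_1$ belongs to some $Q_{r_k}\setminus Q_{r_{k+1}}$ (or to $Q_1\setminus Q_\rho$) we get the pointwise decay
\[
|v(x,s)|\le C_*\big(\max(|x|,\sqrt{|s|})\big)^{1+\alpha},\qquad C_*=C_*(p,n).
\]
Moreover $v$ solves \eqref{modanot} with $a=q_\infty$: this is exactly the computation preceding Lemma~\ref{compactres} applied to $u-u(0,0)$ (a constant is annihilated by $\partial_t$ and by $\Delta_p^N$). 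So it suffices to prove $|v(0,t)|\le C|t|^{(1+\alpha)/2}$.

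The plan for this is a one-sided barrier argument in the spirit of the proof of Lemma~\ref{lem:space2time}. Fix $t\in(-r^2,0)$ (we may assume $r$ small enough that the cylinder below lies in $Q_1$; the complementary range of $t$ is trivial via $\osc_{Q_1}u\le1$), set $\delta:=\sqrt{|t|}$, and work on $\mathcal C:=B_\delta(0)\times(t-\delta^2,t]$. On the parabolic boundary $\partial_p\mathcal C$ one has $\max(|x|,\sqrt{|s|})\le\sqrt2\,\delta$, hence $|v|\le K:=C_*(\sqrt2\,\delta)^{1+\alpha}$ there. I would then use the spatially constant test function
\[
\bar\psi(x,s):=K+M\delta^{\alpha-1}\big(s-(t-\delta^2)\big),\qquad M=M(p,n)\ \text{large},
\]
which satisfies $\bar\psi\ge K\ge v$ on $\partial_p\mathcal C$, while $D\bar\psi\equiv0$ and $D^2\bar\psi\equiv0$ force the spatial part of the operator to vanish, so that $\partial_s\bar\psi$ minus the spatial operator equals $M\delta^{\alpha-1}\ge M\ge\|f\|_{L^\infty(Q_1)}$ (using $\delta\le1$ and $\alpha<1$). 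Assuming $\sup_{\mathcal C}(v-\bar\psi)>0$, the supremum cannot be attained on $\partial_p\mathcal C$, and testing the viscosity subsolution $v$ against $\bar\psi$ at the maximum point yields a contradiction, exactly as in Lemma~\ref{lem:space2time}; hence $v\le\bar\psi$ in $\mathcal C$. Evaluating at the interior point $(0,t)$ gives
\[
v(0,t)\le\bar\psi(0,t)=K+M\delta^{1+\alpha}\le C\,|t|^{\frac{1+\alpha}{2}}.
\]
Running the symmetric argument with $\underline\psi:=-\bar\psi$ gives the matching lower bound, and since $v(0,t)=u(0,t)-u(0,0)$ this is the claim.

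I do not expect a genuine obstacle: the right-hand side $f$ is bounded and $v$ is already known to be of order $\delta^{1+\alpha}$ on $\partial_p\mathcal C$, so the only design choice that matters is the geometry of $\mathcal C$ — it must be anchored so that the target point $(0,t)$ sits on its \emph{final} time-slice (hence in the parabolic interior, where comparison applies) while its bottom face is pushed down only to $t-\delta^2$, i.e.\ parabolically at scale $\delta$. This is precisely what keeps the barrier from growing by more than $M\delta^{\alpha-1}\cdot\delta^2=M\delta^{1+\alpha}$ across $\mathcal C$, matching the size of $v$ on $\partial_p\mathcal C$. Everything else is routine: continuity $v\in C(\overline{\mathcal C})$, admissibility of $\bar\psi$ as a test function in each case of the definition of viscosity solution (all trivial since $D^2\bar\psi\equiv0$), and bookkeeping of the $(p,n)$-dependence of the constants.
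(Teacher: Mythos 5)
Your argument is correct and follows the same overall strategy as the paper: set $v(x,s)=u(x,s)-u(0,0)-Du(0,0)\cdot x$, note that $v$ solves \eqref{modanot} with $a=Du(0,0)$, and run a one-sided barrier comparison in a parabolically scaled cylinder anchored at the target time $t$. The difference is in the barrier itself. The paper feeds the time-slice oscillation bound $\osc_{B_r}v(\cdot,s)\le Cr^{1+\alpha}$ into the already-proved Lemma \ref{lem:space2time}, whose barrier necessarily carries a quadratic term $2Ar^{-2}|x|^2$ precisely because that lemma only controls the oscillation on each slice (not the size of $v$), so it must ``pin'' the barrier at a contact point on the initial slice. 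You instead use the stronger \emph{pointwise} decay \eqref{aminater}, $|v(x,s)|\le C_*\bigl(\max(|x|,\sqrt{|s|})\bigr)^{1+\alpha}$, which lets you take a spatially constant, temporally linear barrier: then $D\bar\psi\equiv 0$ and $D^2\bar\psi\equiv 0$ kill the spatial operator entirely, and the only computation left is $\partial_s\bar\psi=M\delta^{\alpha-1}\ge\norm{f}_{L^\infty}$. This is a legitimate simplification bought by the extra information available in the specific situation; the paper's route is the same comparison argument packaged through the more general Lemma \ref{lem:space2time}. One small point to keep in mind when you write this up: the contradiction at the interior (or top-face) maximum requires checking both branches of the viscosity subsolution definition, i.e.\ $D\bar\psi+q_\infty\ne 0$ and $D\bar\psi+q_\infty=0$, but since $D^2\bar\psi\equiv 0$ the spatial term vanishes in every branch, as you already observe.
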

	
	\begin{proof}
  For $(x,t)\in Q_r$, set
	$$v(x,t):=u(x,t)-u(0,0)-Du(0,0)\cdot x.$$
It follows from \eqref{itera} that for $x_1, x_2\in B_r$ and $t\in [-r^2, 0]$, we have
\begin{equation*}
	|v(x_1,t)-v(x_2, t)|\leq 
	\underset{(y,s)\in Q_r}{\osc}\, (u(y,s)-Du(0,0)\cdot y)\leq C r^{1+\a}.
\end{equation*}
We conclude that, for all $t\in [-r^2,0]$, we have
	$$\underset{B_r}{\osc}\, v(\cdot,t)\leq Cr^{1+\alpha}=:A.$$
	We claim that there exists a constant $C=C(p,n)>0$ such that 
	$$\underset{Q_r}{\osc}\, v\leq CA+4r^2\norm{f}_{L^\infty(Q_1)}.$$
	Indeed, denoting $b=Du(0,0)$, we
	observe that $v$ satisfies in $Q_r$ the following equation  in the viscosity sense 
	\begin{equation*}
	\partial_t v-\Delta v-(p-2)\left\langle D^2v \dfrac{Dv+b}{|Dv+b|},\dfrac{Dv+b}{|Dv+b|} \right\rangle=f.
	\end{equation*}
	We conclude from Lemma \ref{lem:space2time} that 
	\begin{align*}
	\underset{Q_r}{\osc}\, v&\leq C(p,n)r^{1+\alpha}+4r^2\norm{f}_{L^\infty(Q_1)}\\
	&\leq C(p,n)r^{1+\alpha}.
	\end{align*}
	In particular  for $t\in (-r^2, 0)$, we have
	$$|u(0,t)-u(0,0)|=|v(0,t)|\leq C(p,n)|t|^{\frac{1+\alpha}{2}}.      $$
	Since the equation \eqref{normpl} is invariant under translation, we get the desired result and have thus proven the second estimate \eqref{timefer} in Theorem \ref{thm:main1}.
	\end{proof}
	\section{Local H\"older estimate for the gradient of the limiting equation}
	In this section we derive  estimates for bounded viscosity solutions to the following equation
	\begin{equation}\label{modhom}
	w_t-\Delta w-(p-2)\left\langle D^2w\dfrac{Dw+q}{|Dw+q|},\dfrac{Dw+q}{|Dw+q|}\right\rangle  =0   \quad\text{in}\,\, Q_1,
	\end{equation}
	and in particular prove Proposition \ref{regmodhom}. 
	
\noindent	Introducing the function $v(x,t):=w(x,t)+q\cdot x$, it is easy to check that $v$ is  a viscosity solution to 
	\[v_t-\Delta_p^N v=0\qquad\text{in}\,\, Q_1.\]
	By the  regularity result of Jin and Silvestre \cite[Theorem 1.1]{JinSil2015}, there is $\beta_1=\beta_1(p,n)>0$ so that $v\in C^{1+\beta_1,(1+\beta_1)/2}_{\text{loc}}(Q_1)$ and hence  also $w\in C^{1+\beta_1,(1+\beta_1)/2}_{loc}(Q_1)$. 
	The main difficulty is to provide  $C^{1+\beta_1, (1+ \beta_1)/2}$ estimates which are uniform with respect to $q$. 
	
	The main idea is  to divide the study to the   cases $|q|$ small and $|q|$ large (depending on $p$ and $n$).
	For $|q|$ large enough, that is $|q|>L_0(p,n)$ where $L_0(p,n)$ will be  chosen later, our strategy is to prove  that the  Lipschitz norm of $w$ is controlled independently from $q$, hence the equation is no longer  discontinuous and we can adapt the proof of \cite{JinSil2015} to derive  $C^{1+\beta_1,(1+\beta_1)/2}$ estimates uniform with respect to $q$.  
	When $|q|\leq L_0$ the result follows immediately from \cite[Theorem 1.1]{JinSil2015}. Indeed,  since 
	\[\underset{Q_1}{\osc}\, v\leq \underset{Q_1}{\osc w}+ 2|q|\leq 1+2L_0,\]
	we get that 
	\begin{align*}
	\norm{w}_{C^{1+\beta_1, (1+\beta_1)/2}(Q_{1/2})}&\leq \norm{v}_{C^{1+\beta_1, (1+\beta_1)/2}(Q_{1/2})}+2|q|\\
	&\leq C(p,n)\underset{Q_1}{\osc v}+2L_0\leq C_0(p,n).
	\end{align*}
	Hence, from now on we will focus on the case $|q|$ large.
	\subsection{Lipschitz through the Ishii-Lions method}

	In order to prove Proposition \ref{regmodhom},   we first  need the following technical lemma concerning Lipschitz regularity of solutions of equation \eqref{modhom}. For $n\times n$ matrices we use the matrix norm
	\[
	||A||:=\sup_{|x|\leq 1}\{|Ax|\}.
	\] 
	
	\begin{lemma}\label{liphom1}
		Let $w$ be a bounded viscosity solution to equation \eqref{modhom} with $\underset{Q_1}{\osc}\, w\leq 1$. For all $r\in \left(0,\frac34\right)$, there exists a constant $ \nu_0=\nu_0(p,n)>0$ such that, if $|q|>\nu_0$, then for all  $x,y\in \overline{B_{r}}$ and $t\in [-r^2,0]$,
		\begin{equation}
		\begin{split}
		\abs{w(x,t)-w(y,t)}\le \tilde C\abs{x-y}, 
		\end{split}
		\end{equation}
		where  $\tilde{C}=\tilde{C}(p,n)>0$.
	\end{lemma}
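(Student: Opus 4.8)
The plan is to use the Ishii–Lions method: we prove a Lipschitz estimate for $w$ in the interior by doubling variables and deriving a contradiction from the viscosity inequalities, exploiting that when $|q|$ is large the gradient term $\tfrac{Dw+q}{|Dw+q|}$ is close to a fixed unit vector, so the equation behaves like a uniformly parabolic equation with a continuous (in fact smooth, Lipschitz in the relevant region) coefficient matrix.

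First I would fix $r\in(0,3/4)$ and a slightly larger radius, set up the standard auxiliary function
\[
\Phi(x,y,t):=w(x,t)-w(y,t)-L\,\phi(|x-y|)-\frac{M}{2}|x-x_0|^2-\frac{M}{2}|y-x_0|^2-\frac{\sigma}{-t},
\]
where $\phi(s)=s-\omega_0 s^{3/2}$ (or a similar concave modulus on a small interval, truncated to stay increasing and concave), $L$ is the Lipschitz constant to be chosen large, $M$ is a large localization constant forcing the maximum into the interior, $x_0$ is the base point, and the $\sigma/(-t)$ term pushes the maximum away from the initial time slice $t=-r^2$. One assumes for contradiction that $w(x,t)-w(y,t)>\tilde C|x-y|$ for some $x,y\in\overline{B_r}$, $t\in[-r^2,0]$ with $\tilde C$ to be determined; this forces $\Phi$ to have a positive maximum at an interior point $(\bar x,\bar y,\bar t)$ with $\bar x\neq\bar y$ and $\bar t>-r^2$. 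The localization/penalization arguments here are routine and follow, e.g., the scheme in Imbert–Silvestre or the original Ishii–Lions paper; the key point is that $M$ can be chosen depending only on $p,n,r$ once $L$ is fixed, and that the penalization terms contribute only lower-order (controllable) amounts to the final inequality.

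Next I would apply the parabolic theorem on sums (Crandall–Ishii) to produce, for each $\eta>0$, symmetric matrices $X,Y$ and a time-derivative term $b_1-b_2$ such that $(b_1,D_x\varphi,X)$ is in the parabolic superjet of $w$ at $(\bar x,\bar t)$ and $(b_2,-D_y\varphi,Y)$ in the parabolic subjet at $(\bar y,\bar t)$, with the standard matrix inequality
\[
\begin{pmatrix} X & 0\\ 0 & -Y\end{pmatrix}\le D^2(\text{penalty}) + \eta(\cdots).
\]
Because $|q|>\nu_0$ is large and the gradients at $\bar x$ and $\bar y$ are $L\phi'(|\bar x-\bar y|)\frac{\bar x-\bar y}{|\bar x-\bar y|}+O(M)$ — bounded independently once we know $\phi'\le 1$ and $M$ is fixed — both $Dw(\bar x,\bar t)+q$ and $Dw(\bar y,\bar t)+q$ are nonzero and nearly parallel to $q/|q|$, so the coefficient matrices $A(\cdot)=I+(p-2)\,\widehat{(\cdot)}\otimes\widehat{(\cdot)}$ evaluated at the two points differ by $O(1/|q|)$. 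Subtracting the two viscosity inequalities, using $b_1-b_2 = \partial_t(\text{penalty})$ which is controlled, and the ellipticity $\lambda I\le A\le\Lambda I$, one gets
\[
0 \ \le\ \tr\big(A_1 X - A_2 Y\big) + (\text{small}),
\]
and the matrix inequality combined with the strict concavity of $\phi$ (which yields a negative definite contribution along the direction $\bar x-\bar y$ of order $L|\bar x-\bar y|^{-1/2}$) forces the right-hand side to be strictly negative once $L$ is chosen large enough depending on $p,n,r$ — this is the standard "good direction" estimate of Ishii–Lions. The discrepancy $\|A_1-A_2\|=O(1/|q|)$ is absorbed by choosing $\nu_0$ large, and the $M$-penalty terms are absorbed by taking $L$ (hence $\tilde C$) large. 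This contradiction proves the claim with $\tilde C=\tilde C(p,n)$ (after adjusting for $r$; the dependence on $r$ enters only through the fixed localization constant $M$ and can be tracked, but for $r$ in a fixed compact subinterval of $(0,3/4)$ it is harmless, and the statement as written allows $\tilde C$ to be uniform because the oscillation hypothesis $\osc_{Q_1}w\le1$ already fixes the scale).

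The main obstacle is making the coefficient comparison rigorous: one must verify that both $Dw(\bar x,\bar t)+q$ and $Dw(\bar y,\bar t)+q$ stay uniformly away from $0$ and that $\|A(Dw(\bar x,\bar t)+q)-A(Dw(\bar y,\bar t)+q)\|$ is genuinely $O(|q|^{-1})$ with a constant independent of the (unknown) gradients — this requires the a priori bound $|Dw(\bar x,\bar t)|,|Dw(\bar y,\bar t)|\le L+CM$ coming from the test-function gradients, and then a Lipschitz estimate for the map $p\mapsto p/|p|$ on the region $\{|p-q|\le L+CM\}$ when $|q|$ is large. Once $\nu_0$ is fixed so that this region stays in $\{|p|\ge |q|/2\}$, the estimate $\|A(p_1)-A(p_2)\|\le \frac{C(p)}{|q|}|p_1-p_2|\le\frac{C(p)(L+CM)}{|q|}$ follows, and choosing $\nu_0$ large relative to $L+CM$ closes the argument. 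Everything else — the penalization, the theorem on sums, the concavity estimate — is standard and I would only sketch it.
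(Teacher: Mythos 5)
Your overall approach (Ishii--Lions doubling of variables combined with the theorem on sums and a strictly concave modulus $\varphi$) is indeed the one the paper follows. However, there is a genuine gap in the way you estimate the coefficient discrepancy, and as written the argument does not close.

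The problematic step is the bound
$\|A(p_1)-A(p_2)\|\le \frac{C(p)}{|q|}|p_1-p_2|\le\frac{C(p)(L+CM)}{|q|}$,
where you replace $|p_1-p_2|$ by the crude a priori bound $L+CM$ on the test-function gradients. This makes the discrepancy term independent of the doubling distance $|\bar x-\bar y|$. But the matrix $Y$ coming from the theorem on sums has $\|Y\|\lesssim L/|\bar x-\bar y|$, so the bad term is $\tr\bigl((A_1-A_2)Y\bigr)\lesssim \frac{L(L+CM)}{|q|\,|\bar x-\bar y|}$, which blows up like $|\bar x-\bar y|^{-1}$. The good (concavity) term is only of order $-L\,|\bar x-\bar y|^{\gamma-2}$ with $\gamma>1$, i.e.\ it blows up \emph{more slowly} than $|\bar x-\bar y|^{-1}$. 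Hence for $|\bar x-\bar y|$ small the bad term dominates no matter how large you take $\nu_0$; the contradiction does not materialize. Taking $\nu_0$ ``large relative to $L+CM$'' cannot compensate for a quantity that depends on the (unbounded from below) doubling distance.

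The missing ingredient --- which is the crux of the paper's proof --- is the interior H\"older estimate of Lemma \ref{compactres} (Krylov--Safonov). Since $\Phi(\bar x,\bar y,\bar t)>0$, the penalization terms are controlled by $w(\bar x,\bar t)-w(\bar y,\bar t)\le C_\beta|\bar x-\bar y|^\beta$, which yields $M|\bar x-x_0|,\,M|\bar y-y_0|\le C_\beta |\bar x-\bar y|^{\beta/2}$. Consequently $|a-b|\le 2C_\beta|\bar x-\bar y|^{\beta/2}$ (note $|p_1-p_2|=|a-b|$ is precisely $M(\bar x-x_0)+M(\bar y-y_0)$, not $O(L+CM)$), and after dividing by $|\eta_i|\gtrsim L$ one gets $|\bar\eta_1-\bar\eta_2|\lesssim \frac{C_\beta}{L}|\bar x-\bar y|^{\beta/2}$. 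This gives the bad term the decay $|\bar x-\bar y|^{\beta/2-1}$. The paper then chooses $\gamma=1+\beta/2$ so that the good term has exactly the \emph{same} power $|\bar x-\bar y|^{\gamma-2}=|\bar x-\bar y|^{\beta/2-1}$, but carries an extra factor of $L$. Only with this exponent matching does taking $L$ large produce the contradiction; the role of $\nu_0$ is merely to guarantee $|\eta_i|\gtrsim L$ and is \emph{not} used to kill the discrepancy. Your write-up has the mechanism of the cancellation backwards, and without the H\"older input for $|a-b|$ the argument fails.
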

	\begin{proof}
		We use the viscosity method introduced by Ishii and Lions in \cite{ishiilions} (see also \cite{cyril_luis, sil16} for further applications).
		\subsection*{Step 1}
		 
Notice that it suffices to show that $w$ is Lipschitz in $Q_{3/4}$, this will imply that $w$ is Lipschitz in any smaller cube $Q_r$ for $r\in \left(0,\frac34\right)$  with the same Lipschitz constant.	In the sequel we take  $r=3/4$.	
		First we fix $x_0, y_0\in B_{r}$, $t_0\in (-r^2,0)$ and introduce the auxiliary function 
		\begin{align*}
		\Phi(x, y,t):&=w(x,t)-w(y,t)-L\vp(\abs{x-y})\\
		&\quad-\frac M2\abs{x-x_0}^2-\frac M2\abs{y-y_0}^2-\frac M 2 (t-t_0)^2,
		\end{align*}
		where $\vp$ is defined below. Our aim is to show that $\Phi(x, y,t)\leq 0$ for $(x,y)\in \overline{B_r}\times\overline{ B_r}$ and $t\in [-r^2,0]$.
		For a proper choice of $\vp$, this yields the desired regularity result.
		We take
		\[
		\begin{split}
		\vp(s)=
		\begin{cases}
		s-s^{\gamma}\kappa_0& 0\le t\le s_1:=(\frac 1 {\gamma\kappa_0})^{1/(\gamma-1)}  \\
		\vp(s_1)& \text{otherwise},
		\end{cases}
		\end{split}
		\]
		where $2>\gamma>1$ and $\kappa_0>0$ is such that  $s_1\geq 2 $ and $\gamma \kappa_02^{\gamma-1}\leq 1/4$. 
		
		Then
		\[
		\begin{split}
		\vp'(s)&=\begin{cases}
		1-\gamma s^{\gamma-1}\kappa_0 & 0\le s\le s_1 \\
		0& \text{otherwise},
		\end{cases}\\
		\vp''(s)&=\begin{cases}
		-\gamma(\gamma-1)s^{\gamma-2} \kappa_0& 0<s\le s_1 \\
		0 & \text{otherwise}.
		\end{cases}
		\end{split}
		\]
		In particular, $\varphi'(s)\in  [\frac34,1]$  and $\varphi''(s)<0$ when $s\in [0,2]$.
		\subsection*{Step 2} We argue by contradiction and assume that
		$\Phi$ has a positive
		maximum at some point $(x_1, y_1,t_1)\in \bar B_r\times \bar B_r\times [-r^2,0]$.
		Notice  that $x_1\neq y_1$, otherwise the maximum of $\Phi$ would be non positive.
		Since  $w$ is  continuous and
		its oscillation is bounded, choosing $$M\geq \dfrac{32\osc_{Q_1}{w}}{\max(d\left( (x_0,t_0) ,\partial Q_r\right), d\left( (y_0,t_0) ,\partial Q_r\right))^2},$$ we get
		\begin{equation*} 
\begin{split}
		\abs{x_1-x_0}+|t_1-t_0|&\leq 2 \sqrt{\dfrac{2|w(x_1,t_1)-w(y_1,t_1|)}{M}}\leq 2\sqrt{ \frac{2\osc_{Q_1}{w}}{M}}\\
		&\leq \dfrac{d\left( (x_0,t_0),  \partial Q_r\right)}{2},\\
		\abs{y_1-y_0}+|t_1-t_0|&\leq 2\sqrt{\dfrac{2|w(x_1,t_1)-w(y_1,t_1|)}{M}}\leq 2\sqrt{\frac{2\osc_{Q_1}{w}}{M}}\\
		&\leq \dfrac{d\left( (y_0,t_0),  \partial Q_r\right)}{2},
		\end{split}
		\end{equation*}
		so that $x_1$ and $y_1$ are in $B_r$ and $t_1\in (-r^2,0)$.
		
\noindent		We know by Lemma \ref{compactres} that,   $w$ is locally H\"older continuous and that  there exists a constant $C_{\beta}>0$ depending only on $p, n$ and $\underset{Q_1}{\osc}\, w$ such that 
		$$|w(x,t)-w(y,t )|\leq C_\beta|x-y|^\beta \quad\text{for}\, x, y\in B_r, t\in (-r^2,0).$$	
		Using that $w$ is H\"older continuous, it follows, adjusting the constants (by  choosing $2M\leq C_{\beta}$), that  
		\begin{equation*}
		\begin{split}
		M\abs{x_1-x_0}\leq C_{\beta}&\abs{x_1-y_1}^{\beta/2},\\
		M\abs{y_1-y_0}\leq C_{\beta}&\abs{x_1-y_1}^{\beta/2}.
		\end{split}
		\end{equation*}

		By  Jensen-Ishii's lemma (also known as theorem of sums, see \cite[Theorem 8.3]{crandall1992user} and \cite{cyril_luis}), there exist
		\[
		\begin{split}
		&(\sigma,\tilde \zeta_x,X)\in \ol P^{2,+}\left(w(x_1,t_1)-\frac M2\abs{x_1-x_0}^2-\frac M2(t_1-t_0)^2\right),\\
		&(\sigma, \tilde \zeta_y,Y)\in \ol P^{2,-}\left(w(y_1,t_1)+\frac M2\abs{y_1-y_0}^2\right),
		\end{split}
		\]
		that is 
		\[
		\begin{split}
		&(\sigma+M(t_1-t_0),a,X+MI)\in \ol P^{2,+}w(x_1,t_1),\\ &(\sigma,b,Y-MI)\in \ol P^{2,-}w(y_1,t_1),
		\end{split}
		\]
		where ($\tilde \zeta_x=\tilde \zeta_y$)
		\[
		\begin{split}
		a&=L\vp'(|x_1-y_1|) \frac{x_1-y_1}{\abs{x_1-y_1}}+M(x_1-x_0)=\tilde \zeta_x+M(x_1-x_0),\\
		b&=L\vp'(|x_1-y_1|) \frac{x_1-y_1}{\abs{x_1-y_1}}-M(y_1-y_0)=\tilde \zeta_y-M(y_1-y_0).
		\end{split}
		\]
		If $L$ is large enough (depending on the H\"older constant  $C_\beta$), we have
		\[
		\abs{a},\abs{b}\geq L\varphi'(|x_1-y_1|) - C_\beta\abs{x_1-y_1}^{\beta/2}\ge \frac L2.
		\]
\noindent Moreover, by  Jensen-Ishii's lemma, for any $\tau>0$, we can take $X, Y\in \mathcal{S}^n$ such that 
		\begin{equation}\label{matriceineq1}
		- \big[\tau+2\norm{B}\big] \begin{pmatrix}
		I&0\\
		0&I 
		\end{pmatrix}\leq
		\begin{pmatrix}
		X&0\\
		0&-Y 
		\end{pmatrix}
		\end{equation}
		and
		\begin{equation}\label{matineq2}
			\begin{pmatrix}
		X&0\\
		0&-Y 
		\end{pmatrix}
		\le 
		\begin{pmatrix}
		B&-B\\
		-B&B 
		\end{pmatrix}
		+\frac2\tau \begin{pmatrix}
		B^2&-B^2\\
		-B^2&B^2 
		\end{pmatrix},
		\end{equation}
		where 
		\begin{align*}	
		B=&L\vp''(|x_1-y_1|) \frac{x_1-y_1}{\abs{x_1-y_1}}\otimes \frac{x_1-y_1}{\abs{x_1-y_1}}\\
		&\quad +\frac{L\vp'(|x_1-y_1|)}{\abs{x_1-y_1}}\Bigg( I- \frac{x_1-y_1}{\abs{x_1-y_1}}\otimes \frac{x_1-y_1}{\abs{x_1-y_1}}\Bigg)
		\end{align*}	
		and 
		\begin{align*}	
		B^2=
		&\frac{L^2(\vp'(|x_1-y_1|))^2}{\abs{x_1-y_1}^2}\Bigg( I- \frac{x_1-y_1}{\abs{x_1-y_1}}\otimes \frac{x_1-y_1}{\abs{x_1-y_1}}\Bigg)\\
		&\quad +L^2(\vp''(|x_1-y_1|))^2 \frac{x_1-y_1}{\abs{x_1-y_1}}\otimes \frac{x_1-y_1}{\abs{x_1-y_1}}.
\end{align*}	
		Notice that 
		\begin{equation}\label{lilou}
		\norm{B}\leq L \vp'(|x_1-y_1|),
		\end{equation}
		\begin{equation}\label{filou}
	 \norm{B^2}\leq L^2\left(|\vp''(|x_1-y_1|)|+\dfrac{|\vp'(|x_1-y_1|)|}{|x_1-y_1|}\right)^2,
		\end{equation}
		and for $\xi=\frac{x_1-y_1}{\abs{x_1-y_1}}$, we have
	
				\begin{equation*}
		\langle B\xi,\xi\rangle=L\vp''(|x_1-y_1|)<0, \qquad\langle B^2\xi,\xi\rangle=L^2(\vp''(|x_1-y_1|))^2.
		\end{equation*}
		Choosing $\tau=4L\left(|\vp''(|x_1-y_1|)|+\dfrac{|\vp'(|x_1-y_1|)|}{|x_1-y_1|}\right)$,  we have that for $\xi=\frac{x_1-y_1}{\abs{x_1-y_1}}$,
			\begin{align}\label{mercit}
		\langle B\xi,\xi\rangle +\frac2\tau \langle B^2\xi,\xi\rangle&=L\left(\vp''(|x_1-y_1|)+\frac2\tau L(\vp''(|x_1-y_1|))^2\right)\nonumber\\
		&\leq \dfrac{L}{2}\vp''(|x_1-y_1|)<0 .
		\end{align}
		In particular applying  inequalities \eqref{matriceineq1} and \eqref{matineq2} to any  vector $(\xi,\xi)$ with $\abs{\xi}=1$, we  have that $X- Y\leq 0$ and $\norm{X},\norm{Y}\leq 2\norm{B}+\tau$.  We refer the reader to \cite{ishiilions,crandall1992user} for details.\\
		Thus, setting $\eta_1=a+q$, $\eta_2=b+q$, we have for $\abs{q}$ large enough (depending only on $L$)
		\begin{align}\label{koivu}
		\abs{\eta_1}&\geq \abs{q}-\abs{a}\geq \frac{\abs{a}}{2}\geq \frac L4,\nonumber\\
		\abs{\eta_2}&\geq \abs{q}-\abs{b}\geq \frac{\abs{b}}{2}\geq \frac L4,
		\end{align}
		where $L$ will be chosen later on and $L$ will depend only on $p,n, C_\beta$.
		Writing the  viscosity inequalities
		\begin{equation*}
		\begin{split}
		M(t_1-t_0)+\sigma&\leq  \tr (X+MI)+(p-2)\dfrac{\left\langle(X+MI) (a+q), (a+q)\right\rangle}{|a+q|^2}\\
		\sigma&\geq  \tr(Y-MI)+(p-2) \dfrac{\left\langle(Y-MI) (b+q), (b+q)\right\rangle}{|b+q|^2},
		\end{split}
		\end{equation*}
	 we get 
		\begin{equation*}
		\begin{split}
		M(t_1-t_0)+\sigma&\leq  \tr (A(\eta_1)(X+MI))\\
		-\sigma&\leq  -\tr (A(\eta_2)(Y-MI))
		\end{split}
		\end{equation*}
		where  for $\eta \neq 0$ $\bar \eta=\dfrac{\eta}{|\eta |}$ and 
		\[A(\eta):= I+(p-2)\ol\eta\otimes \ol\eta.\]
		Adding the two inequalities, we get 
		\begin{equation*}
		0 \leq  \tr (A(\eta_1)(X+MI))
		-\tr (A(\eta_2)(Y-MI))+M|t_1-t_0|.
		\end{equation*}
		It follows that 
		\begin{align}\label{gregory}
		0 \leq  &\tr (A(\eta_1)(X-Y))
		+tr ((A(\eta_1)-A(\eta_2))Y)\nonumber\\
		&+M\big[\tr (A(\eta_1))+\tr (A(\eta_2)) \big]+2Mr^2.
		\end{align}
		
		Notice that all the eigenvalues of $X-Y$ are non positive. Moreover,  applying the previous matrix inequality \eqref{matineq2} to the vector $(\xi,-\xi)$ where $\xi:=\frac{x_1-y_1}{|x_1-y_1|}$  and using \eqref{mercit}, 
		we obtain
		\begin{align}\label{camille}
		\langle (X-Y) \xi, \xi\rangle&\leq 4\left(\langle B\xi,\xi\rangle+\frac2\tau\langle B^2\xi,\xi\rangle)\right)\nonumber \\
		&\leq 2 L\vp''(|x_1-y_1|)<0.
		\end{align}
		Hence  at least one of the eigenvalue of $X-Y$  that we denote by  $\lambda_{i_0}$ is   negative and smaller than $2 L\vp''(|x_1-y_1|)$. The eigenvalues of $A(\eta_1)$ belong to $[\min(1, p-1), \max(1, p-1)]$.	Using \eqref{camille}, it follows by \cite{theo} that 
		\begin{align*}  
		\tr(A(\eta_1) (X-Y))&\leq \sum_i \lambda_i(A(\eta_1))\lambda_i(X-Y)\\
		&\leq \min(1, p-1)\lambda_{i_0}(X-Y)\\
		&\leq 2\min(1, p-1) L \vp''(|x_1-y_1|).
		\end{align*}
		It is easy to see that 
		\begin{align*}A(\eta_1)-A(\eta_2)&=\ol\eta_1\otimes \ol\eta_1-\ol\eta_2\otimes \ol\eta_2\\
	&=(\ol\eta_1-\ol\eta_2+\ol\eta_2)\otimes\ol\eta_1-\ol\eta_2\otimes(\ol\eta_2-\ol\eta_1+
	\ol\eta_1)\\
	&=(\ol\eta_1-\ol\eta_2)\otimes\ol\eta_1+\ol\eta_2\otimes\ol\eta_1
	-\ol\eta_2\otimes(\ol\eta_2-\ol\eta_1)-\ol\eta_2\otimes\ol\eta_1\\
	&=(\ol\eta_1-\ol\eta_2)\otimes\ol\eta_1
	-\ol\eta_2\otimes(\ol\eta_2-\ol\eta_1)
		\end{align*}
		and hence
		\begin{align*}
		\tr( (A(\eta_1)-A(\eta_2)) Y)&\leq n\norm{Y}
		\norm{A(\eta_1)-A(\eta_2)}  \\
		&\leq n\abs{p-2}\norm{Y}|\ol\eta_1-\ol\eta_2|\left( |\ol\eta_1|+|\ol\eta_2|\right)\\
		&\leq 2n\abs{p-2}\norm{Y}|\ol \eta_1-\ol\eta_2|.
		\end{align*}
		On one hand we have
		\begin{equation*}
		\begin{split}
		\abs{\ol \eta_1-\ol \eta_2}&=
		\abs{\frac{\eta_1}{\abs {\eta_1}}-\frac{\eta_2}{\abs {\eta_2}}}
\le \max\left( \frac{\abs{\eta_2- \eta_1}}{\abs{\eta_2}},\frac{ \abs{\eta_2- \eta_1}}{\abs{\eta_1}}\right)\\
		&\le \frac {8C_\beta}{ L}\abs{x_1-y_1}^{\beta/2},
		\end{split}
		\end{equation*}
		where we used \eqref{koivu}.\\
		On the other hand, by \eqref{matriceineq1}--\eqref{filou},
		\begin{align*}
		\norm{Y}&=\max_{\ol \xi} |\langle Y\ol \xi, \ol \xi\rangle|
		\le 2 |\langle B\ol \xi,\ol \xi \rangle|+\frac4\tau|\langle B^2\ol \xi,\ol \xi \rangle| \\
		&\leq 4L\left( \frac{\vp'(|x_1-y_1|)}{\abs{x_1-y_1}}+ |\vp''(|x_1-y_1|)|\right).
		\end{align*}
		Hence, remembering that $|x_1-y_1|\leq 2$, we end up with
	\begin{align*}
	 \tr( (A(\eta_1)-A(\eta_2)) Y)&\leq 128n\abs{p-2}C_\beta \vp'(|x_1-y_1|) \abs{x_1-y_1}^{-1+\beta/2}\\
	 &\quad+128n\abs{p-2}C_\beta |\vp''(|x_1-y_1|)|.
	 \end{align*}
		 Finally, we have
		$$ M(\tr(A(\eta_1))+\tr(A(\eta_2)))\leq 2Mn\max(1, p-1).$$

		\subsection*{Step 3}
		
		Gathering the previous estimates with \eqref{gregory} and recalling the definition of $\vp$, we get 
		\begin{align*}
		0&\leq 128n\abs{p-2}C_\beta\left(\vp'(|x_1-y_1|)  \abs{x_1-y_1}^{\beta/2-1}+  |\vp''(|x_1-y_1|)|\right)\\
		&\quad+2\min(1, p-1) L \vp''(|x_1-y_1|)  +2Mr^2 +2Mn\max(1, p-1)\\
		&\leq 128n\abs{p-2}C_\beta  \abs{x_1-y_1}^{\beta/2-1}+2nM\max(1, p-1)\\
		&\quad +128n\abs{p-2}C_\beta\gamma(\gamma-1)\kappa_0\abs{x_1-y_1}^{\gamma-2}\\
		&\quad-2\min(1, p-1)  \gamma(\gamma-1)\kappa_0L\abs{x_1-y_1}^{\gamma-2} +2Mr^2.
		\end{align*}
		Taking $\gamma=1+\beta/2>1$ and choosing $L$ large enough depending on $p,n, C_\beta$, we get that
		$$ 0\leq \dfrac{-\min(1, p-1)\gamma(\gamma-1)\kappa_0}{200} L\abs{x_1-y_1}^{\gamma-2}<0,   $$
		which is  a contradiction. Hence choosing first $L$ such that 
		\begin{align*}
		0&>128n\abs{p-2}C_\beta \left(\vp'(|x_1-y_1|)  \abs{x_1-y_1}^{\beta/2-1}+ |\vp''(|x_1-y_1|)|\right)\\
		&\quad +\min(1, p-1) L \vp''(|x_1-y_1|)+2nM\max(1, p-1)+2Mr^2
		\end{align*}
		and then taking $|q|$ large enough (depending on $L$, it suffices that $|q|> 6L >\frac32 |a|$ see \eqref{koivu}), we reach a contradiction and   hence $\Phi(x,y,t)\leq 0$ for $(x,y,t)\in B_r\times B_{r}\times(-r^2,0)$. The desired result follows since for $x_0,y_0\in B_{r}$, $t_0\in (-r^2,0)$, we have $\Phi(x_0,y_0,t_0)\leq 0$, we get
		\[
		|w(x_0,t_0)-w(y_0,t_0)|\leq L\vp(|x_0-y_0|)\leq L|x_0-y_0|.\qedhere
		\]
	\end{proof}
	\subsection{Improvement of oscillation and small perturbation result}

	Notice that if $|q|$ is  large enough then the equation \eqref{modhom}  satisfied by $w$ is uniformly parabolic and the operator can no longer be  discontinuous. Indeed, taking $\nu_0$ from Lemma \ref{liphom1} and assuming that $|q|>\nu_0$, we know from Lemma \ref{liphom1} that for  $(x,t)\in Q_{r}$, the gradient  $|Dw(x,t)|$ is controlled by some constant $\tilde C$ depending only on $p,n, \norm{w}_{L^\infty(Q_1)}$ and independent of $|q|$. 	It follows that,   if $q$ satisfies
	\[|q|\geq L_0:=\max(\nu_0, 4\tilde C)\geq 4\norm{Dw}_{L^{\infty}(Q_{r})},\]  we have
	$$3\tilde{C}\leq |q|-| Dw|\leq| Dw+q|.$$
	Equation \eqref{modhom} can be rewritten as 
	$$w_t-\tr(A(x,t)D^2w(x,t))=0,$$
	where $$A(x,t)=I+(p-2)\dfrac{Dw(x,t)+q}{|Dw(x,t)+q|}\otimes\dfrac{Dw(x,t)+q}{|D w(x,t)+q|}.$$
	Since we already know that $w$ is locally of class $C^{1+\beta_1, (1+\beta_1)/2}$ (see the beginning of this section where we introduced $v(x,t):=w(x,t)+q\cdot x$ and used the result of  \cite[Theorem 1.1]{JinSil2015},), we can see   that $w$ solves a uniformly parabolic equation with H\"older coefficients.
	By standard regularity results  (see \cite[Theorem 14.10]{lieberm} and  \cite[Theorem 5.1]{ladyuralpara}), we conclude that $w\in C^{2+\gamma,1+\gamma/2}_{loc}(Q_1)$.\\
	For $r\in (\frac12, \frac34)$, considering $\tilde{w}(x,t)=\dfrac{w(rx,r^2t)}{\tilde{C}}$ where $\tilde{C}$ is given in Lemma \ref{liphom1}, we have that $\tilde{w}$ is a smooth solution  to 
	\begin{equation*}
	\partial_t \tilde w-\Delta \tilde w-(p-2) \left\langle D^2\tilde w\frac{D\tilde w+\tilde q}{\abs{D\tilde w+\tilde q}}, \frac{\tilde Dw+\tilde q}{\abs{D\tilde w+\tilde q}}\right\rangle=0\quad\text{in}\quad Q_1,
	\end{equation*}
	with $ |D\tilde{w}|\leq 1$ in $Q_1$ and $|\tilde{q}|=\left|\dfrac{rq}{\tilde{C}}\right|>4r>2$.
	
	Hence without loss of generality,  when $|q|$ is large enough,  we can work  with smooth solutions to  \eqref{modhom} satisfying $|Dw|\leq 1$ and $|q|>2$.
	
	In order to derive  a H\"older gradient estimate independent of $q$ for solutions of \eqref{modhom}, we need to consider two alternatives: either we can use an improvement of oscillation  or we can show that the solution is  close to a linear function and then use a small perturbation result (see \cite{wang_para_II}). Proofs of these auxiliary results for \eqref{modhom} are quite similar to those in \cite{JinSil2015}, but for the reader's convenience we decided to give the details.
	
	First we start with an improvement of oscillation for the projection of  $Dw$ on an arbitrary direction.
	\begin{lemma} \label{firstosc}
		Let $w$ be  a smooth  solution of \eqref{modhom} with $|D w|\leq 1$ in $Q_1$,  and $|q|>2$.  For every $\ell \in (0,1), \mu>0$, there exist $\tau(\mu,n)>0, \delta(n,p,\mu)>0$, such that for any $e\in \mathbb{S}^{n-1}$, if 
		\[
		\begin{split}
		\abs{\{ (x,t)\in Q_1 \,:\,Dw(x,t)\cdot e\le 1-\ell \}}> \mu \abs{Q_1},
		\end{split}
		\]
		then 
		\[
		\begin{split}
		Dw\cdot e\le 1-\delta\text{ in }Q_\tau=B_\tau(0)\times (-\tau^2,0].
		\end{split}
		\]
	\end{lemma}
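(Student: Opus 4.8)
The plan is to prove Lemma \ref{firstosc} by a barrier/weak Harnack argument applied to the equation satisfied by the directional derivative $v := Dw \cdot e$. Since $w$ is a smooth solution of \eqref{modhom} and, by the reductions preceding the lemma, we have $|Dw| \le 1$ and $|q| > 2$ so that $|Dw + q| \ge 1$ stays bounded away from zero, the coefficient matrix $A(x,t) = I + (p-2)\,\overline{(Dw+q)} \otimes \overline{(Dw+q)}$ is uniformly elliptic with ellipticity constants $\lambda = \min(1,p-1)$, $\Lambda = \max(1,p-1)$, and its entries are smooth in $(x,t)$. Differentiating \eqref{modhom} in the direction $e$, one finds that $v$ solves a linear uniformly parabolic equation of the form $\partial_t v - \mathrm{tr}(A(x,t) D^2 v) - b(x,t) \cdot Dv = 0$, where $b$ is a bounded vector field whose $L^\infty$ bound depends only on $p, n$ and the $C^1$ bounds on $w$ (these are under control because $|Dw| \le 1$ and, after the reduction, $w$ is smooth with estimates). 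In particular $v$ is a subsolution and supersolution of an equation to which the Krylov--Safonov theory applies, uniformly in $q$.

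The key observation is that $v \le |Dw| \le 1$ everywhere in $Q_1$, so $1 - v \ge 0$ is a nonnegative supersolution of a linear uniformly parabolic equation in divergence-free (non-divergence) form with bounded lower-order terms. The hypothesis says that the set $\{1 - v \ge \ell\}$ occupies at least a fraction $\mu$ of $Q_1$. First I would apply the weak Harnack inequality (or, equivalently, the measure-to-pointwise estimate for supersolutions, e.g. \cite{krylov1980,LWang1,cyril_luis}) to $1 - v$ on a suitable subcylinder: there exist $\tau = \tau(n,p,\mu) \in (0,1)$ and a constant such that
\[
\inf_{Q_\tau} (1 - v) \ge c\left( \frac{1}{|Q_1|}\int_{Q_1} (1-v)^{p_0} \right)^{1/p_0} \ge c\,\ell\,\mu^{1/p_0}
\]
for some exponent $p_0 = p_0(n,p) > 0$; one has to be slightly careful with the time geometry of the weak Harnack inequality (the measure information must sit in the "past" relative to $Q_\tau$), but this is handled exactly as in \cite[proof of the analogous step]{JinSil2015}: one first propagates the measure estimate forward in time using the fact that $\{1-v \ge \ell\}$ has substantial measure on a positive-measure set of time slices, then applies the weak Harnack inequality. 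Setting $\delta := c\,\ell\,\mu^{1/p_0}$ (and shrinking $\delta$ if necessary so that $\delta < 1$) gives $v = Dw\cdot e \le 1 - \delta$ on $Q_\tau$, which is the conclusion, with $\tau$ depending on $\mu, n$ and $\delta$ on $n, p, \mu$ (through $\ell$).

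The main obstacle I anticipate is not the weak Harnack inequality itself but making the argument genuinely uniform in $q$ and $e$: one must check that the lower-order coefficient $b(x,t)$ arising from differentiating the $q$-dependent coefficient has an $L^\infty$ (indeed $L^{n+1}$) bound independent of $q$ — this uses crucially that $|Dw+q| \ge 1$ so that derivatives of $\overline{(Dw+q)}$ are controlled by $|D^2 w|$ times a bounded factor, and that after the preliminary rescaling $w$ has interior $C^2$ estimates independent of $q$ (from \cite[Theorem 14.10]{lieberm} or \cite[Theorem 5.1]{ladyuralpara}, as recorded just above the lemma). The other point requiring care is the parabolic time-shift in the weak Harnack inequality, i.e. arranging that the cylinder $Q_\tau$ on which we conclude lies in the future of the region where the measure hypothesis is used; this is a standard but slightly technical bookkeeping step. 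Once these are in place, the conclusion follows directly, and the dependence of $\tau$ only on $(\mu, n)$ (and not $p$) comes from the fact that the geometric constants in the measure-propagation step depend only on dimension and the chosen measure fraction.
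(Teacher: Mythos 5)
Your plan to apply the weak Harnack inequality to $1 - Dw\cdot e$ has the right shape, but it rests on a step that fails. Differentiating the equation $w_t - A_{ij}(Dw)\,w_{ij}=0$ in the direction $e$ gives, for $v = Dw\cdot e$,
\[
\partial_t v - A_{ij}(Dw)\,v_{ij} - b_m(x,t)\,v_m = 0,
\qquad b_m = A_{ijm}(Dw)\,w_{ij},
\]
so the drift $b$ is bounded by a constant times $\|D^2 w\|_{L^\infty}$, \emph{not} by the $C^1$ norm of $w$ as you claim. You then argue that $w$ enjoys interior $C^2$ estimates independent of $q$, but no such estimate is available at this stage: the qualitative $C^{2+\gamma}_{\mathrm{loc}}$ smoothness recorded before the lemma comes from Schauder theory once $Dw$ is known to be H\"older, and that H\"older bound on $Dw$ depends on $\|w+q\cdot x\|_{L^\infty}$ and hence on $|q|$. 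In fact a $q$-uniform bound on $Dw$ in $C^\beta$ (let alone on $D^2w$) is precisely what Proposition \ref{regmodhom} is in the process of establishing, so it cannot be invoked here. As a result, the Krylov--Safonov/weak Harnack constant for your equation in $v$ is not uniform in $q$, and the resulting $\tau,\delta$ would degenerate as $|q|\to\infty$.

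The paper's proof avoids this by \emph{not} applying Harnack to $1-Dw\cdot e$ directly. Instead, following Jin--Silvestre, one sets $h = Dw\cdot e - \ell + \rho\,|Dw|^2$ and transforms it by the exponential barrier $\bar h = \frac{1 - e^{-c_1((1-\ell+\rho)-h)}}{c_1}$. The key computation is that after differentiating, the dangerous drift contribution has the form $A_{ijm}(Dw)\,h_m\,w_{ij}$ (the $\rho|Dw|^2$ term reorganizes the unbounded pieces into a multiple of $Dh$), and this is absorbed by Young's inequality against the two sign-definite quadratic terms produced by the extra $\rho|Dw|^2$ piece (giving $-2\rho A_{ij}w_{ki}w_{kj}\le -2\rho\lambda|D^2w|^2$) and by the exponential (giving $c_1 A_{ij}h_ih_j\ge c_1\lambda|Dh|^2$). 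Choosing $c_1$ large, $\bar h$ becomes a nonnegative supersolution of $\partial_t\bar h - A_{ij}(Dw)\,\bar h_{ij}\ge 0$ with \emph{no} drift and ellipticity constants depending only on $p,n$; the uniform bound $|A_{ijm}(Dw)|\le 4|p-2|$ (valid because $|Dw+q|\ge 1$) is all that is needed, with no control on $D^2w$. That is exactly the uniformity in $q$ you need, and it is the missing idea in your proposal.
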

	
	\begin{proof}
		 Notice that the equation \eqref{modhom} can be rewritten as 
		\begin{equation*}
		w_t-\sum_{ij} A_{ij}(Dw) \cdot w_{ij}=0,
		\end{equation*}	
		where $$A_{ij}(s)=\delta_{ij}+(p-2)\dfrac{(s_i+q_i)}{|s+q|}\dfrac{(s_j+q_j)}{|s+q|}$$
		satisfies
		$$\min(1, p-1)I\leq A\leq \max(1, p-1)I,$$
		that is $w$ is a solution of a uniformly parabolic equation with bounded coefficients.\\
Moreover, notice that
			\begin{align*}
		A_{ijm}(s)&:= \dfrac{\partial A_{ij}(s)}{\partial s_m}\\
		&=(p-2)\dfrac{\delta_{im}(s_j+q_j)+\delta_{mj}(s_i+q_i)}{|s+q|^2}\\
		&\quad-\dfrac{2(p-2) (s_m+q_m)(s_i+q_i)(s_j+q_j)}{|s+q|^4}.
		\end{align*}
 Since by assumption $|q|>2$ and $|Dw|\leq 1$, that is $\left|Dw+q\right|\geq |q|-|Dw|\geq 1$, we observe that 
		\begin{equation}\label{filosin1}|A_{ijm}(Dw)|\leq\dfrac{4|p-2|}{\left|Dw+q\right|}\leq 4|p-2|.
		\end{equation}
		The proof  proceeds as in \cite[Lemma 4.1]{JinSil2015} by deriving the  equation, constructing good barriers and using a weak Harnack inequality \cite[Proposition 2.3]{JinSil2015} for nonnegative  supersolutions of  uniformly parabolic equations with bounded coefficients.
		 Indeed, for $\rho>0$, defining $h(x,t):=Dw(x,t)\cdot e-l+\rho|Dw(x,t)|^2$, differentiating the equation and using \eqref{filosin1}, we get that, for $c_1$ appropriately chosen, the function
$\bar h(x,t):=\frac{1-e^{-c_1((1-l+\rho)-h(x,t))}}{c_1}$
 is a nonnegative  supersolution of a uniformly parabolic equation with bounded coefficients.
		\qedhere
		
	\end{proof}
	
	If we can iterate the previous lemma to all directions, then
	proceeding by iteration,    we derive an improvement of oscillation for $|Dw|$.
	\begin{corollary}\label{iterlem}
		Assume that $|q|>2$ and let $w$ be a smooth solution to \eqref{modhom} with $|Dw|\leq 1$. For every $l\in (0,1)$ and $\mu >0$,  there exist $\tau=\tau(\mu,n)\in(0,1/4)$  and $\delta=\delta(\mu,n, p)>0$, such that for every $k\in \mathbb{N}$, if
		\begin{eqnarray*}
		a) \quad	\left|\left\{(x,t)\in Q_{\tau^i} : Dw(x,t)\cdot e\leq l(1-\delta)^i\right\}\right|>\mu |Q_{\tau^i}|\\
		\text{for all}\quad  e\in \mathbb{S}^{n-1}\quad\text{and}\quad i=0,...,k
		\end{eqnarray*}
		then 
		$$b) \quad|Dw|\leq (1-\delta)^{i+1} \quad\text{in}\quad Q_{\tau^{i+1}}\quad\text{for}\quad i=0,...,k.$$
	\end{corollary}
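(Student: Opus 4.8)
The plan is to establish the Corollary by induction on $i$, applying Lemma \ref{firstosc} at each scale $\tau^i$ after a suitable rescaling. First I would fix $l\in(0,1)$ and $\mu>0$ and set $\ell:=1-l\in(0,1)$; let $\tau=\tau(\mu,n)$ and $\delta=\delta(\mu,n,p)$ be the constants produced by Lemma \ref{firstosc} for this $\ell$ and $\mu$, shrinking $\tau$ if necessary so that $\tau\in(0,1/4)$. The key point to record is that these constants depend only on $\mu,n,p$ — in particular not on $\ell$ — so the very same $\tau$ and $\delta$ are used at every scale, and the contraction factor $1-\delta$ never deteriorates. The inductive statement is that $|Dw|\le(1-\delta)^i$ in $Q_{\tau^i}$ together with a) at step $i$ forces $|Dw|\le(1-\delta)^{i+1}$ in $Q_{\tau^{i+1}}$; since $|Dw|\le1=(1-\delta)^0$ in $Q_1=Q_{\tau^0}$ is assumed, this yields b) for $i=0,\dots,k$.

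For the step $i=0$ (the base case), hypothesis a) says that for every $e\in\mathbb{S}^{n-1}$ one has $\abs{\{(x,t)\in Q_1:\ Dw\cdot e\le l\}}=\abs{\{(x,t)\in Q_1:\ Dw\cdot e\le 1-\ell\}}>\mu\abs{Q_1}$; since $|Dw|\le1$ in $Q_1$ and $|q|>2$, Lemma \ref{firstosc} applies and gives $Dw\cdot e\le 1-\delta$ in $Q_\tau$ for every such $e$. Letting $e$ run over the unit sphere and, at each point, choosing $e=Dw(x,t)/|Dw(x,t)|$ wherever $Dw(x,t)\neq0$, I get $|Dw|\le 1-\delta$ in $Q_\tau$, which is b) for $i=0$.

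For the inductive step I would assume $|Dw|\le(1-\delta)^i$ in $Q_{\tau^i}$ and introduce the rescaled function
\[
\tilde w(y,s):=\frac{w(\tau^i y,\tau^{2i}s)}{\tau^i(1-\delta)^i},\qquad (y,s)\in Q_1,
\]
so that $D\tilde w(y,s)=(1-\delta)^{-i}(Dw)(\tau^i y,\tau^{2i}s)$, hence $|D\tilde w|\le1$ in $Q_1$ by the inductive hypothesis. Using the parabolic scaling $w(\cdot,\cdot)\mapsto w(\tau^i\cdot,\tau^{2i}\cdot)$ together with the joint $1$-homogeneity of \eqref{modhom} in the pair $(w,q)$, one checks that $\tilde w$ is again a smooth solution of \eqref{modhom} with $q$ replaced by $\tilde q:=q/(1-\delta)^i$, and $|\tilde q|=|q|/(1-\delta)^i>2$. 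Rewriting a) at step $i$ in the variables $(y,s)=(\tau^{-i}x,\tau^{-2i}t)$, the factor $\tau^{i(n+2)}$ cancels between the level set and $Q_{\tau^i}$, so for every $e\in\mathbb{S}^{n-1}$ one has $\abs{\{(y,s)\in Q_1:\ D\tilde w\cdot e\le l\}}>\mu\abs{Q_1}$. Applying Lemma \ref{firstosc} to $\tilde w$ with the same $\ell,\mu$ yields $D\tilde w\cdot e\le1-\delta$ in $Q_\tau$ for all $e$, hence $|D\tilde w|\le1-\delta$ in $Q_\tau$; undoing the rescaling this is precisely $|Dw|\le(1-\delta)^{i+1}$ in $Q_{\tau^{i+1}}$. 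Iterating from $i=0$ up to $i=k$ completes the argument.

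I expect the only genuinely delicate point to be the rescaling bookkeeping: one must verify that \eqref{modhom} is invariant under the combined transformation $w\mapsto(1-\delta)^{-i}\tau^{-i}w(\tau^i\cdot,\tau^{2i}\cdot)$, $q\mapsto(1-\delta)^{-i}q$, and that the two structural hypotheses of Lemma \ref{firstosc} survive — $|D\tilde w|\le1$ from the bound at the previous step and $|\tilde q|>2$ from $(1-\delta)^i<1$. The remaining ingredients — translating the measure condition under parabolic scaling, optimizing the direction $e$ to upgrade the bound on $Dw\cdot e$ to a bound on $|Dw|$, and the fact that $\tau$ and $\delta$ can be frozen once and for all because they are independent of $\ell$ — are routine.
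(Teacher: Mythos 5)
Your proof is correct and follows essentially the same route as the paper's: induction on the scale, the rescaling $v(y,s)=\tau^{-i}(1-\delta)^{-i}w(\tau^i y,\tau^{2i}s)$ with $q\mapsto q/(1-\delta)^i$, verification that $|Dv|\le 1$ and $|\bar q|>2$ persist, translation of the measure condition under the parabolic change of variables, and application of Lemma \ref{firstosc} followed by taking the supremum over $e\in\mathbb{S}^{n-1}$. The only cosmetic difference is that you phrase the induction as a step-by-step implication from scale $i$ to scale $i+1$, while the paper phrases it as induction on the full statement up to index $k$; these are logically equivalent and the rescaling bookkeeping you flag as the delicate point is exactly what the paper's proof verifies.
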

	
	\begin{proof}
		We proceed by induction on $k$.
		The case $k=0$ follows from the previous lemma. 
		Assume that if the assumption $a)$ holds true for $i=0,\ldots,k-1$ then the conclusion  $b)$ follows for $i=0,\ldots,k-1$.  Then it is enough to  show   that  if  assumption $a)$  is satisfied  for  $i=0,\ldots,k$ then claim  $b)$ follows for  $i=0,\ldots,k.$. Hence,  knowing that the assumption  $a)$  is satisfied for  $i=0,\ldots,k$ and  claim $b)$ holds for  $i=0,\ldots,k-1$,  it remains to show that claim  $b)$ holds for $i=k$.\\
		For this purpose,   we define the rescaled function
		$v(x,t)=\dfrac{w(\tau^k x, \tau^{2k} t)}{\tau^k(1-\delta)^k}$.
		From the induction hypothesis, we have that 
		\begin{eqnarray*}
			&|Dv|\leq 1\qquad\text{in}\quad Q_1,\\
			&|\left\{(x,t)\in Q_1 :  Dv(x,t)\cdot e\leq l\right\}|>\mu |Q_1|.
		\end{eqnarray*}
		Moreover $v$ solves in $Q_1$
		$$v_t-\Delta v-(p-2)\left\langle D^2v \dfrac{Dv(1-\delta)^k+q}{\left|Dv(1-\delta)^k+ q\right|},\dfrac{Dv(1-\delta)^k+ q}{\left|Dv(1-\delta)^k+ q\right|}\right\rangle=0.$$
		That is $v$ solves 
		$$v_t-\Delta v-(p-2)\left\langle D^2v \dfrac{Dv+\bar q}{\left|Dv+\bar q\right|},\dfrac{Dv+\bar q}{\left|Dv+\bar q\right|}\right\rangle=0$$
		where $\left|\bar q\right|=\left|\dfrac{q}{(1-\delta)^k}\right|>|q|>2$.\\
		It follows from Lemma \eqref{firstosc}, that 
		$$Dv\cdot e\leq (1-\delta)\quad \text{in}\quad Q_\tau \quad\text{for all}\quad e\in \mathbb{S}^{n-1}.$$
		This implies that $|Dv|\leq 1-\delta$ in $Q_{\tau}$ and consequently 
		\begin{equation*} |Dw|\leq(1-\delta)^{k+1}\qquad\text{in}\quad Q_{\tau^{k+1}}.\qquad \qedhere
		\end{equation*}\end{proof}

	Next, we show that under some conditions, $w$ can be arbitrary  close to a linear function. 
	\begin{lemma}\label{osclem}
		Fix a constant  $\eta >0$ and let  $w$ be a smooth solution to the uniformly parabolic equation  \eqref{modhom} with $\abs{Dw}\le 1$, $|q|>2$ and $w(0,0)=0$. Suppose that 
		\[
		\begin{split}
		\abs{\{ (x,t)\in Q_1 \,:\, \abs{Dw(x,t) -e}>\eps_{0}\}}\le \eps_{1} \text{ for some }e\in \mathbb{S}^{n-1}.
		\end{split}
		\] 
		Then if $\eps_0,\eps_1\ge 0$ are small enough, it follows that 
		\[
		\begin{split}
		\underset{(x,t)\in Q_{1/2}}{\osc}(w(x,t)-e\cdot x)\le \eta.
		\end{split}
		\]
		The smallness of  $\eps_0,\eps_1$  depends on $n$, $\eta$ and $p$.
	\end{lemma}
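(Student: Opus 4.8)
The plan is to argue by contradiction and compactness, in the spirit of the analogous result in \cite{JinSil2015}. Fix $\eta>0$ and suppose the conclusion fails. Then there are sequences $\eps_0^j\to 0^+$, $\eps_1^j\to 0^+$, unit vectors $e_j$, vectors $q_j$ with $|q_j|>2$, and smooth solutions $w_j$ of \eqref{modhom} with $q=q_j$ such that $|Dw_j|\le 1$ in $Q_1$, $w_j(0,0)=0$,
\[
\abs{\{(x,t)\in Q_1\,:\,\abs{Dw_j(x,t)-e_j}>\eps_0^j\}}\le \eps_1^j ,
\]
but $\underset{Q_{1/2}}{\osc}\,(w_j(x,t)-e_j\cdot x)>\eta$ for every $j$. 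We then aim to produce a contradiction.

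The first step is to obtain compactness of $\{w_j\}$ that is uniform in $q_j$. Since $|Dw_j|\le 1$ we have $\osc_{B_1} w_j(\cdot,t)\le 2$ for all $t\in[-1,0]$, so Lemma \ref{lem:space2time} (with $f\equiv 0$ and $a=q_j$) gives $\osc_{Q_1} w_j\le C(n,p)$. Dividing $w_j$ by this constant only replaces $q_j$ by $q_j/C(n,p)$ and leaves the structure of the equation \eqref{rescpb} with $f\equiv0$ unchanged, so Lemma \ref{compactres} yields a bound $\norm{w_j}_{C^{\beta,\beta/2}(Q_{1/2})}\le C(n,p)$ uniform in $j$. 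By Arzel\`a-Ascoli, after passing to a subsequence, $w_j\to w_\infty$ uniformly on $\overline{Q_{1/2}}$ and $e_j\to e_\infty\in\mathbb{S}^{n-1}$; passing to a further subsequence we have either $q_j\to q_\infty$ (with $|q_\infty|\ge 2$) or $|q_j|\to\infty$ with $q_j/|q_j|\to\nu_\infty\in\mathbb{S}^{n-1}$.

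The second step is to identify $w_\infty$. The measure hypothesis together with $|Dw_j|\le 1$ forces $Dw_j\to e_\infty$ in $L^1_{loc}(Q_1)$, since on any compact $K\subset Q_1$ one has $\int_K\abs{Dw_j-e_\infty}\le \abs{K}(\eps_0^j+\abs{e_j-e_\infty})+2\eps_1^j\to 0$. As $w_j\to w_\infty$ uniformly, the distributional derivatives converge, so $Dw_\infty=e_\infty$ a.e.\ on $Q_{1/2}$ and hence $w_\infty(x,t)=e_\infty\cdot x+c(t)$ for some continuous $c$ with $c(0)=w_\infty(0,0)=0$. By stability of viscosity solutions under uniform convergence (relaxed limits, exactly as in Lemma \ref{flatle}), $w_\infty$ is a viscosity solution of the limiting equation: \eqref{modhom} with $q_\infty$ in the bounded case, or $\partial_t w-\Delta w-(p-2)\langle D^2w\,\nu_\infty,\nu_\infty\rangle=0$ in the unbounded case; note no discontinuity survives in the limit, since $\abs{e_\infty+q_\infty}\ge\abs{q_\infty}-1\ge 1$. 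In either case, evaluated on $e_\infty\cdot x+c(t)$ the operator reduces to $\partial_t$ because $D^2w_\infty\equiv 0$; testing with $\varphi(x,t)=e_\infty\cdot x+\psi(t)$ (which has $D\varphi=e_\infty\neq0$, $D^2\varphi=0$) shows that $c$ is a viscosity solution of $c'=0$ on $(-1/4,0]$, hence $c\equiv c(0)=0$. Thus $w_\infty(x,t)=e_\infty\cdot x$ on $Q_{1/2}$, and since $w_j(x,t)-e_j\cdot x\to w_\infty(x,t)-e_\infty\cdot x\equiv 0$ uniformly on $\overline{Q_{1/2}}$, we get $\underset{Q_{1/2}}{\osc}\,(w_j-e_j\cdot x)\to 0$, contradicting $\underset{Q_{1/2}}{\osc}\,(w_j-e_j\cdot x)>\eta$.

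The point requiring care is the compactness of $\{w_j\}$ \emph{uniformly} in $q_j$, which is precisely what Lemmas \ref{lem:space2time} and \ref{compactres} are designed to provide, together with passing to the limit in the equation in the regime $|q_j|\to\infty$; the latter is handled by normalizing the gradient term by $|q_j|$ as in the proof of Lemma \ref{flatle}. Once $w_\infty$ is known to be affine in the space variable, identifying it as $e_\infty\cdot x$ and closing the contradiction are routine.
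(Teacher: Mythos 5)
Your proof is correct, but it takes a genuinely different route from the paper's. The paper dispatches this lemma in one line by following \cite[Lemma 4.4]{JinSil2015}: a \emph{direct, quantitative} argument in which one uses the measure hypothesis together with $|Dw|\le 1$ to bound $\osc_{B_{1/2}}\bigl(w(\cdot,t)-e\cdot x\bigr)$ slice-by-slice (an averaging/Fubini--type estimate trading the small measure of $\{|Dw-e|>\eps_0\}$ for an $L^\infty$ bound on the spatial deviation), and then propagates this to a space-time oscillation bound via Lemma \ref{lem:space2time}. You instead run a compactness/contradiction argument: negate the conclusion along a sequence $\eps_0^j,\eps_1^j\to 0$; normalize and combine Lemma \ref{lem:space2time} with Lemma \ref{compactres} to get $C^{\beta,\beta/2}$ bounds on $w_j$ uniform in $q_j$; extract a uniform limit $w_\infty$; use $|Dw_j|\le 1$ and the measure hypothesis to deduce $Dw_j\to e_\infty$ in $L^1$, hence $w_\infty(x,t)=e_\infty\cdot x+c(t)$; and then show $c\equiv 0$ by testing the limit equation (stable under uniform convergence both in the $q_j$-bounded regime, where $|e_\infty+q_\infty|\ge 1$, and in the $|q_j|\to\infty$ regime, handled by dividing through by $|q_j|$ as in Lemma \ref{flatle}) with $\varphi(x,t)=e_\infty\cdot x+\psi(t)$, which has $D\varphi\ne 0$ and $D^2\varphi=0$. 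This contradicts $\osc_{Q_{1/2}}(w_j-e_j\cdot x)>\eta$. Both arguments are sound; yours is shorter given the machinery already built in the paper and cleanly isolates the uniformity in $q$, but it is inherently non-quantitative (no explicit dependence of $\eps_0,\eps_1$ on $\eta$), whereas the direct argument in \cite{JinSil2015} produces effective constants. Since the lemma only asserts existence of small enough $\eps_0,\eps_1$ depending on $n,p,\eta$, both approaches suffice.
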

	
	\begin{proof}  The proof follows as in \cite[Lemma 4.4]{JinSil2015}  by first controlling  the oscillation of $w(x,t)-e\cdot x$ on  $B_{1/2}$ and using  Lemma \ref{lem:space2time}.
	\end{proof}

	Next we state a simple calculus fact, which links the assumptions of Lemma \ref{iterlem} and Lemma  \ref{osclem}. The proof is provided in \cite[pages 14-15]{JinSil2015}.
	\begin{lemma}
		\label{lem:line-of-calculus}
		Let $\ell \in (0,1)$ and $\mu>0$. Let $v:Q_1\rightarrow\R$ be a smooth function satisfying $\abs{Dv}\le 1$ in $Q_1$ and
		$$
		\abs{\{ (x,t)\in  Q_1 \,:\,Dv(x,t)\cdot e\le\ell  \}}\le \mu \abs{Q_1}
		$$
		for some $e\in \mathbb{S}^{n-1}$, then for $\eps_0:=\sqrt{2(1-\ell)}$ and $\eps_1:=\mu \abs{Q_1}$
		$$
		\abs{\{ (x,t)\in  Q_1 \,:\,\abs{Dv(x,t)-e}> \eps_0  \}}\le \eps_1.
		$$ 
\end{lemma}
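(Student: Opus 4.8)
The plan is to reduce the statement to a single pointwise algebraic inequality and then invoke monotonicity of Lebesgue measure. Concretely, I would prove the set inclusion
\[
\{(x,t)\in Q_1 : |Dv(x,t)-e|>\eps_0\}\subseteq \{(x,t)\in Q_1 : Dv(x,t)\cdot e\le \ell\}
\]
and deduce the conclusion directly from the hypothesis $|\{Dv\cdot e\le \ell\}|\le \mu|Q_1|$. This is essentially a one-line computation, so there is no genuine obstacle; the only thing to watch is that the inclusion is set up in the correct direction.

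The key computation is the following. Since $|e|=1$ and $|Dv(x,t)|\le 1$ throughout $Q_1$, for every $(x,t)\in Q_1$ we have
\[
|Dv(x,t)-e|^2=|Dv(x,t)|^2-2\,Dv(x,t)\cdot e+1\le 2\big(1-Dv(x,t)\cdot e\big).
\]
Hence, if a point $(x,t)$ satisfies $Dv(x,t)\cdot e>\ell$, then $|Dv(x,t)-e|^2<2(1-\ell)=\eps_0^2$, i.e. $|Dv(x,t)-e|<\eps_0$. Taking the contrapositive yields precisely the inclusion above; note that the strict versus non-strict inequalities appearing in the definitions of the two sets are irrelevant for the measure estimate.

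It then remains only to apply monotonicity of $|\cdot|$ together with the assumption, giving
\[
|\{(x,t)\in Q_1 : |Dv(x,t)-e|>\eps_0\}|\le |\{(x,t)\in Q_1 : Dv(x,t)\cdot e\le \ell\}|\le \mu|Q_1|=\eps_1,
\]
which is the claim. As anticipated, the proof does not use smoothness of $v$ at all, only the pointwise gradient bound $|Dv|\le 1$, and the sole point requiring a little care is to phrase the argument via the contrapositive so that the exceptional set for $|Dv-e|$ is contained in (rather than contains) the exceptional set for $Dv\cdot e$.
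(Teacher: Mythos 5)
Your proof is correct and is the same elementary computation the paper delegates to Jin--Silvestre; the pointwise bound $|Dv-e|^2=|Dv|^2+1-2\,Dv\cdot e\le 2(1-Dv\cdot e)$ combined with the contrapositive and monotonicity of measure is exactly the intended argument, and your remark that smoothness is irrelevant is also accurate.
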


	%\begin{proof}
	%	If $\abs{Dv(x,t)-e}> \eps_0$, then
	%	\[
	%	\begin{split}
	%	\eps_0^2&<(Dv(x,t)-e)\cdot (Dv(x,t)-e)=\abs{Dv(x,t)}^2-2e\cdot Dv(x,t)+1.
	%	\end{split}
	%	\]
	%	Thus
	%	\[
	%	\begin{split}
	%	e\cdot Dv(x,t)&<1-\frac{\eps_0^2}{2}.
	%	\end{split}
	%	\]
	%	This implies
	%	\[
	%	\begin{split}
	%	\{ (x,t)\in Q_1 \,:\,\abs{Dv(x,t)-e}> \eps_0\}\subset \left\{ (x,t)\in Q_1 \,:\,  e\cdot Dv(x,t)\le 1-\frac{\eps_0^2}{2}\right\}
	%	\end{split}
	%	\]
	%	and thus 
	%	\[
	%	\begin{split}
	%	&\abs{\{ (x,t)\in Q_1 \,:\,\abs{Dv(x,t)-e}> \eps_0\}}\\
	%	&\le \abs{ \left\{ (x,t)\in Q_1 \,:\,  e\cdot Dv(x,t)\le 1-\frac{\eps_0^2}{2}\right\}}\\
	%	&\le \mu\abs{Q_1}=:\eps_1.\qedhere
	%	\end{split}
	%	\]
	%\end{proof}
	We will also use the following regularity estimate for small perturbation of solutions
	of fully nonlinear parabolic equations, which was proved by Wang \cite{wang1} and  is a parabolic analogue to the result of \cite{savin_perturb_elliptic}.
	\begin{theorem}
		\label{thm:close-to-plane}
		Let $w$ be a smooth solution to (\ref{modhom}) in $Q_1$ with $|q|>2$ and $|Dw|\leq 1$. Then for each $\gamma>0$, there are $\eta(n,p,\gamma),C(n,p,\gamma)>0$ such that
		if for some linear function $L(x)$ with $\frac12\le \abs{DL}\le 2$ it holds 
		\[
		\begin{split}
	\norm{w-L}_{L^{\infty}(Q_1)}\le \eta 
		\end{split}
		\]
		then
		\[
	\begin{split}
	\norm{w-L}_{C^{2+\gamma,1+\gamma/2}(Q_{1/2})}\le C. 
	\end{split}
	\]
	\end{theorem}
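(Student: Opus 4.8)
This is the parabolic small-perturbation theorem of Wang \cite{wang1}, the analogue of Savin's elliptic result \cite{savin_perturb_elliptic}; the task is to verify that its hypotheses hold with constants that do not depend on $q$. First, since $D^2L\equiv 0$, $\partial_tL\equiv 0$ and $\tr(A(DL+q)\cdot 0)=0$, the affine function $L$ is itself a classical solution of \eqref{modhom}. Writing the equation as $\partial_tw=\tr(A(Dw+q)\,D^2w)=:F(D^2w,Dw)$ with $A(\eta)=I+(p-2)\,\overline\eta\otimes\overline\eta$, it is uniformly parabolic with ellipticity constants $\min(1,p-1)$ and $\max(1,p-1)$, \emph{linear} in the Hessian variable, and --- because $|q|>2$ and $|Dw|\le1$ force $|Dw+q|\ge1$ --- smooth in the gradient variable on a fixed neighbourhood of $\{|Dw|\le1\}$, with all derivatives of $\xi\mapsto A(\xi+q)$ bounded there by constants depending only on $p$ (this is the bound \eqref{filosin1} and its higher-order analogues), \emph{uniformly in} $q$. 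Since moreover $\tfrac12\le|DL|\le2$, all the structural hypotheses of Wang's theorem are met with constants depending only on $n$ and $p$, and the theorem yields $\eta=\eta(n,p,\gamma)$ and $C=C(n,p,\gamma)$ as claimed.

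\textbf{Idea of the proof of Wang's theorem.} For completeness one argues by an improvement-of-flatness iteration. Because $L$ solves \eqref{modhom}, $v:=w-L$ solves the \emph{linear} uniformly parabolic equation $\partial_tv=a^{ij}(x,t)D_{ij}v$ with $a^{ij}=A_{ij}(Dw+q)$ and $\|v\|_{L^\infty(Q_1)}\le\eta$. The core step is that there are $\rho\in(0,1)$, $\eta_0>0$ and $C_1$, depending only on $n,p,\gamma$, so that if $\eta\le\eta_0$ one can find $b'\in\R^n$ with $|b'-DL|\le C_1\eta$ and a parabolic quadratic $P$ --- affine in $t$, with $|D^2_xP|+|\partial_tP|\le C_1\eta$ and $\partial_tP=\tr(A(b'+q)D^2_xP)$ --- such that $\|w-b'\cdot x-P\|_{L^\infty(Q_\rho)}\le\eta\,\rho^{2+\gamma}$. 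This is proved by compactness: if it failed there would be $w_j,q_j,L_j$ with $\eta_j:=\|w_j-L_j\|_{L^\infty(Q_1)}\to0$ for which the normalisations $v_j:=(w_j-L_j)/\eta_j$ satisfy $\|v_j\|_{L^\infty(Q_1)}=1$ and solve $\partial_tv_j=a^{ij}_jD_{ij}v_j$, so by the Krylov--Safonov estimate $\{v_j\}$ is precompact in $C^{\alpha_0,\alpha_0/2}_{\mathrm{loc}}$; passing to subsequences so that $DL_j\to b$ and $(Dw_j+q_j)/|Dw_j+q_j|\to\xi_\infty$, and using $\|w_j-L_j\|_{L^\infty}\to0$ together with the $q$-uniform interior gradient control for \eqref{modhom} (here the smoothness of $A$ away from the origin is what is used), the coefficients $a^{ij}_j$ converge locally uniformly to the constant uniformly elliptic matrix $\bar A:=I+(p-2)\xi_\infty\otimes\xi_\infty$, so the limit $v_\infty$ solves $\partial_tv_\infty=\tr(\bar A\,D^2v_\infty)$ in $Q_{3/4}$. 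As a bounded solution of a constant-coefficient parabolic equation, $v_\infty$ is smooth with universal interior bounds, hence is approximated on $Q_\rho$ by its parabolic second-order Taylor polynomial at the origin within $C(n,p)\rho^3$; choosing $\rho$ with $C(n,p)\rho^3\le\tfrac12\rho^{2+\gamma}$ (valid for $\gamma\in(0,1)$) contradicts the failure assumption. Iterating this over the dyadic scales $\rho^k$ --- the corrections $b'-DL$ being $O(\eta\rho^{k(1+\gamma)})$ and therefore summable, so that the frozen matrices stay uniformly elliptic and the rescalings keep $|Dw|\le1$ --- yields a quadratic approximation of $w-L$ with error $\eta\,\rho^{k(2+\gamma)}$ at every scale, i.e. $\|w-L\|_{C^{2+\gamma,1+\gamma/2}(Q_{1/2})}\le C$. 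Once this $q$-uniform $C^{2+\gamma}$ bound is in hand the coefficients $A(Dw+q)$ inherit the $C^{1+\gamma}$ regularity of $Dw$ with $q$-uniform bounds, so a standard parabolic Schauder bootstrap improves $w-L$ to $C^\infty(Q_{1/2})$ with norms depending only on $n$, $p$ and the order, which covers every $\gamma>0$.

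\textbf{Main obstacle.} The only real difficulty is the uniformity in $q$: one cannot simply apply the Jin--Silvestre $C^{1+\beta_1}$ estimate to $v=w+q\cdot x$ and then freeze $Dw$ in $A(Dw+q)$ to quote parabolic Schauder, because that estimate's right-hand side grows like $\osc_{Q_1}(w+q\cdot x)\approx 2|q|$. The compactness argument circumvents this by using only $q$-independent information --- the ellipticity of $A$, the $q$-uniform bounds for the derivatives of $\xi\mapsto A(\xi+q)$ on $\{|\xi+q|\ge1\}$ (guaranteed by $|q|>2$, $|Dw|\le1$), the a priori Lipschitz bound $|Dw|\le1$ assumed in the statement (itself coming from Lemma \ref{liphom1} after the rescaling performed just before the statement), and the normalisation $\tfrac12\le|DL|\le2$, all of which confine the rescaled solutions to a fixed compact family. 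Making the step ``$\|w_j-L_j\|_{L^\infty}\to0\Rightarrow a^{ij}_j\to\bar A$'' fully rigorous is the delicate technical point, and is exactly where the quasilinear structure (smoothness of the operator in $Dw$ on the non-degenerate region) enters.
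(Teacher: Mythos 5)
The paper offers no proof of this theorem: it is cited directly from Wang \cite{wang1}, the parabolic analogue of Savin's small-perturbation result \cite{savin_perturb_elliptic}. Your proposal correctly identifies this and additionally makes explicit the key uniformity observation the paper leaves implicit --- since $|q|>2$ and $|Dw|\le1$ keep $|Dw+q|\ge1$, the coefficient map $\xi\mapsto A(\xi+q)$ is smooth on the relevant range with derivative bounds depending only on $p$, so all of Wang's structural constants are independent of $q$ --- and your compactness sketch is a faithful (if informal) rendering of Wang's argument.
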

	\subsection{Proof of Proposition \ref{regmodhom}}
	We start with the case $|q|$ large enough.
	\begin{theorem} Let $w$ be a bounded viscosity solution of \eqref{modhom} with $\underset{Q_1}{\osc}\, w\leq 1$. Then there exist $L_0=L_0 (p,n, \norm{w}_{L^\infty(Q_1)})>0$, $\alpha=\alpha(p,n)\in (0,1)$ and $C=C(p,n)>0$ such that if $|q|>L_0$, then 
		$$\norm{w}_{C^{1+\alpha, (1+\alpha)/2}(Q_{1/2})}\leq C\norm{w}_{L^{\infty}(Q_1)}.$$	
	\end{theorem}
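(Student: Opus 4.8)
The plan is to leverage all the auxiliary machinery already set up for the case $|q|$ large: Lemma \ref{liphom1} (Lipschitz bound independent of $q$), the reduction normalizing to $|Dw|\le 1$ and $|q|>2$ with $w$ smooth, Corollary \ref{iterlem} (improvement of oscillation for $|Dw|$ along every direction), Lemma \ref{osclem} and Lemma \ref{lem:line-of-calculus} (closeness to a linear function in a dichotomy), and Theorem \ref{thm:close-to-plane} (small perturbation $C^{2+\gamma,1+\gamma/2}$ estimate). First I would invoke Lemma \ref{liphom1} to fix $\nu_0=\nu_0(p,n)$ and set $L_0:=\max(\nu_0,4\tilde C)$; by the rescaling discussed right before Lemma \ref{firstosc} (replacing $w$ by $\tilde w(x,t)=w(rx,r^2t)/\tilde C$ for a suitable $r\in(\tfrac12,\tfrac34)$), it suffices to prove a $C^{1+\alpha,(1+\alpha)/2}$ estimate at the origin for a smooth solution with $|Dw|\le 1$, $|q|>2$, and (after subtracting $w(0,0)$) $w(0,0)=0$; the full estimate on $Q_{1/2}$ then follows by translating the base point and scaling back to the original $w$, picking up the factor $\norm{w}_{L^\infty(Q_1)}$ from the initial normalization.

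The core is an iterative dichotomy at the origin, exactly in the spirit of \cite{JinSil2015}. Fix $\mu>0$ small and $\ell\in(0,1)$ close to $1$, and let $\tau=\tau(\mu,n)$, $\delta=\delta(\mu,n,p)$ be as in Corollary \ref{iterlem}. I would show by induction on $k$ that either
\[
|Dw|\le (1-\delta)^{k}\quad\text{in }Q_{\tau^{k}},
\]
or there is a unit vector $e_k$ and a linear function $L_k(x)=c_k+e_k\cdot x$ (with slope of size comparable to $1$) such that $w$ rescaled from $Q_{\tau^{k}}$ is within $\eta$ in $L^\infty$ of $L_k$, where $\eta$ is the smallness threshold from Theorem \ref{thm:close-to-plane}. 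Concretely: at step $k$, look at the rescaled solution $v(x,t)=\tau^{-k}(1-\delta)^{-k}\,w(\tau^k x,\tau^{2k}t)$ (or the undivided rescaling in the second alternative), which again solves \eqref{modhom} with $|Dv|\le 1$ and $|\bar q|>2$. If the hypothesis $a)$ of Corollary \ref{iterlem} holds at all scales up to $k$, the gradient decay alternative persists. If it fails at some scale, then for some direction $e$ the set where $Dv\cdot e\le\ell$ has small measure; Lemma \ref{lem:line-of-calculus} converts this into $|\{|Dv-e|>\eps_0\}|\le\eps_1$, Lemma \ref{osclem} gives $\osc_{Q_{1/2}}(v(x,t)-e\cdot x)\le\eta$, and then Theorem \ref{thm:close-to-plane} yields a $C^{2+\gamma,1+\gamma/2}$ bound for $v-L$ on $Q_{1/4}$, hence a fortiori a quantitative linear approximation $\osc_{Q_r}(v-L)\le Cr^{1+\gamma}$ for all $r\le\tfrac14$. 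Scaling back, in either branch one obtains, for a fixed ratio $\theta\in(0,1)$ and some $\alpha=\alpha(p,n)\in(0,1)$ (take $\alpha$ with $(1-\delta)\le\theta^{\alpha}$ so the two decay rates are compatible, and $\alpha\le\gamma$), a vector $p_{k}$ with $|p_{k}|\le C$ and
\[
\osc_{Q_{\theta^{k}}}\bigl(w(x,t)-p_{k}\cdot x\bigr)\le C\,\theta^{k(1+\alpha)}.
\]

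Finally I would close the argument as in the proof of \eqref{fer} already given in the excerpt: the relation $|p_{k+1}-p_k|\le C\theta^{k\alpha}$ shows $p_k\to p_\infty$ with $|p_\infty-p_k|\le C\theta^{k\alpha}$, whence
\[
\sup_{Q_{\theta^{k}}}\bigl|w(x,t)-w(0,0)-p_\infty\cdot x\bigr|\le C\,\theta^{k(1+\alpha)},
\]
and Lemma \ref{lemendi} (the standard characterization of pointwise $C^{1+\alpha,(1+\alpha)/2}$ regularity via such linear approximations) gives that $Dw(0,0)=p_\infty$ and the $C^{1+\alpha,(1+\alpha)/2}$ seminorm of $w$ at the origin is bounded by $C(p,n)$; for the $C^{(1+\alpha)/2}$ control in time one uses Lemma \ref{lem:space2time} applied to $w(x,t)-w(0,0)-p_\infty\cdot x$ with $A\sim r^{1+\alpha}$ and $f\equiv0$, exactly as in Lemma \ref{timereg}. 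Translating the base point over $Q_{1/2}$ and absorbing the normalization constant yields $\norm{w}_{C^{1+\alpha,(1+\alpha)/2}(Q_{1/2})}\le C\norm{w}_{L^\infty(Q_1)}$. The main obstacle is purely bookkeeping: making sure the alternative is genuinely dichotomous at every scale (so the iteration cannot stall), that the slopes $p_k$ stay bounded uniformly in $q$, and that $\alpha$ can be chosen depending only on $p,n$ so that the geometric rate $(1-\delta)^k$ from the gradient-decay branch and the rate $\theta^{k(1+\gamma)}$ from the perturbative branch can be reconciled into a single exponent $1+\alpha$; the uniformity in $q$ is inherited from Lemma \ref{liphom1} and the fact that every rescaled equation again has $|\bar q|>2$, so no new difficulty enters there.
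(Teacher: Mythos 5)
Your proposal is correct and follows essentially the same route as the paper: fix $L_0=\max(\nu_0,4\tilde C)$ via Lemma \ref{liphom1}, reduce to smooth solutions with $|Dw|\le1$ and $|q|>2$, and run the Jin--Silvestre dichotomy built on Corollary \ref{iterlem}, Lemma \ref{lem:line-of-calculus}, Lemma \ref{osclem}, and Theorem \ref{thm:close-to-plane}. The only difference is cosmetic bookkeeping at the end --- the paper takes $k$ to be the first scale at which condition $a)$ of Corollary \ref{iterlem} fails and handles the two cases $k=\infty$ and $k<\infty$ to bound $|Dw(x,t)-Dw(0,0)|$ directly, whereas you phrase the conclusion in improvement-of-flatness language (converging slopes $p_k$ together with Lemma \ref{lemendi}), exactly the style the paper itself uses in Lemma \ref{lemiter}; both formulations yield the same estimate.
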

	\begin{proof}
		Let $L_0=\max(\nu_0, 4\tilde C)$ where $\nu_0$ and $\tilde C$ are given in Lemma \ref{liphom1}. If $|q|\geq L_0$,  then viscosity solutions $w$ to \eqref{modhom} are smooth solutions. Without loss of generality we can assume that $w(0,0)=0$, $|Dw|\leq 1$ and $|q|>2$.
		Let $\tau$ and $\delta$ be the constants given by Corollary \ref{iterlem}.
		Let $\eta>0$ be as in Theorem \ref{thm:close-to-plane} for a fixed  $\gamma\in (0, 1)$. 
		Let $\ell\in (0,1) $ be sufficiently close to 1 and $\mu>0$ small so that
		  $\eps_0,\eps_1>0$ defined by 
		    $$\eps_0:=\sqrt{2(1-\ell)}, \qquad \eps_1:=\mu \abs{Q_1}$$
		   are  sufficiently small in order that  the  result of  Lemma \ref{lem:line-of-calculus} holds.

		Let $k$    be the minimum of the nonnegative integers such that the condition $a)$ in Corollary \ref{iterlem} does not hold. The proof splits into two cases.\\
	\textbf{ Case 1) $k=\infty:$}\\
			In this case we have $Dw(0,0)=0$. There exists $i\in\mathbb{N}$ large enough so that, we have $\tau^{i+1}\leq \abs{x}+\sqrt{|t|}\leq\tau^{i}$, and $$i\leq \frac{\log(\abs{x}+\sqrt{|t|})}{\log(\tau)}\leq (i+1).$$
			Hence, by Corollary \ref{iterlem} it follows that
			\[
			\begin{split}
			\abs{Dw(x,t)}&\le \frac1{1-\delta}(1-\delta)^{i}\leq C (1-\delta)^{\frac{\log(\abs{x}+\sqrt{|t|})}{\log(\tau)}}\\
			&=C(1-\delta)^{\frac{\log(\abs{x}+\sqrt{|t|})}{\log(1-\delta)}\frac{\log(1-\delta)}{\log(\tau)}}\\
			&\leq C(\abs{x}+\sqrt{|t|})^\a,
			\end{split}
			\]
			where $\a=\frac{\log(1-\delta)}{\log(\tau)}$ and $C=(1-\delta)$. This proves the claim in this  case.\\
		\textbf{Case 2)} $k<\infty$:\\
			In this case we have
			\[
			\begin{split}
			\abs{\{ (x,t)\in Q_{\tau^k} \,:\, Dw(x,t) \cdot e \le \ell(1-\delta)^k \}}\le \mu \abs{Q_{\tau^k}}
			\end{split}
			\]
			for some $e\in \mathbb{S}^{n-1}$. Set
			\[
			\begin{split}
			v(x,t)=\frac1{\tau^k(1-\delta)^k}w(\tau^k x,\tau^{2k}t)
			\end{split}
			\]
			which satisfies
			$$v_t-\Delta v-(p-2)\left\langle D^2v \dfrac{Dv+\bar q}{\left|Dv+\bar q\right|},\dfrac{Dv+\bar q}{\left|Dv+\bar q\right|}\right\rangle=0\quad\text{in}\,\, Q_1$$
			where $\left|\bar q\right|=\left|\dfrac{q}{(1-\delta)^k}\right|>|q|>2$.\\
			Moreover $$
			|\left\{(x,t) :  Dv(x,t)\cdot e\leq l\right\}|\leq\mu |Q_1| \quad\text{for some}\quad e\in\mathbb{S}^{n-1},$$
			and  since condition $a)$ holds for $k-1$, we  have  $|Dv|\leq 1$ in $Q_1$.\\
			It follows from Lemma \ref{lem:line-of-calculus}, that  for $\eps_0:=\sqrt{2(1-\ell)}$ and $\eps_1:=\mu\abs{Q_1}$, we have
			$$
			\abs{\{(x,t)\in Q_1  \,:\, \abs {Dv(x,t)-e}>\eps_0 \}}\le \eps_1,$$
			and hence  by Lemma \ref{osclem}, we conclude that 
			\[ \abs{v(x,t)-e\cdot x}\le \eta \text{ for all }(x,t)\in Q_{1/2},
			\]
			where $\eta>0$ can be made arbitrarily small by choosing $\mu>0$ small and $\ell\in(0,1)$ close to one. 	By Theorem \ref{thm:close-to-plane}, there exists $C=C(p,n)>0$ such that
			$$\abs{Dv(x,t)-e}\le C(\abs{x}+\sqrt{|t|})^\gamma\quad \text{in}\,\,Q_{1/4}$$
			and since $|Dv|\leq 1$, this estimate also holds in $Q_1$.\\
			Rescaling back and assuming that $(1-\delta)/\tau\le 1$, we have
			\[
			\begin{split}
			\abs{Dw(x,t)-(1-\delta)^k e}&\le C(1-\delta)^k(\abs{x}\tau^{-k}+\sqrt{|t|}\tau^{-k})^\gamma\\
			&\le C(\abs {x}+\sqrt{|t|})^\gamma
			\end{split}
			\]
			for 	$(x,t)\in \,\,Q_{\tau^k}$.

\noindent This implies the result near zero. Standard translation and scaling arguments imply that 
		\begin{equation*}
		\norm{Dw}_{C^{\gamma, \gamma/2}(Q_{1/2})}\leq C(p,n)\norm{w}_{L^{\infty}(Q_1)}.\qedhere
		\end{equation*}
	\end{proof}
	The $C^{\frac{1+\gamma}{2}}$ H\"older continuity in time for $w$ follows as in the proof of  Lemma \ref{timereg}.
	The proof of Proposition \ref{regmodhom} is now complete since for $|q|\leq L_0$, the result follows from  the regularity estimate  from \cite{JinSil2015} for $v(x,t)=w(x,t)+q\cdot x$ (see the beginning of this section).

	\appendix
	
	\section{Characterizations of functions with H\"older continuous gradient}\label{appen1}
	\begin{lemma}\label{lemendi}
		Let $u\in C(Q_{2R}(0,0))$ for some $R>0$ and assume that there are some positive constants $c_0$ and $\alpha$ with $\alpha\leq 1$ such that for any $(x_0, t_0)$ in $Q_R(0,0)$  there is a vector $q(x_0, t_0)$ satisfying for  any $r\in (0, R]$,
		\begin{align}
		\underset{\overline{Q_r}(x_0, t_0)}{\sup}&|u(x, t)-u(x_0, t_0)-q(x_0, t_0)\cdot (x-x_0)|\nonumber\\
		&\leq \underset{Q_r(x_0, t_0)}{\osc}(u(x,t)-q(x_0,t_0)\cdot x)\nonumber\\
		&\leq c_0r^{1+\alpha}.\label{farouhat}
		\end{align}
		Then $u$ is differentiable with respect to the space variable in $Q_R(0,0)$, $Du\in C^{ \alpha, \frac\alpha2}\left(Q_{\frac R2}(0,0)\right)$ and 
		\begin{equation}\label{liebdet}
		[Du]_{C^{\alpha, \frac\alpha2}\left(Q_{\frac R2}(0,0)\right)}\leq C(n)c_0.
		\end{equation}
	\end{lemma}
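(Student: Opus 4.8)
The plan is to extract everything from the single inequality \eqref{farouhat} by a subtraction-of-cylinders argument, handling pointwise differentiability, the spatial H\"older seminorm of the gradient, and its temporal seminorm in turn.

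First I would establish differentiability and identify $q$ with $Du$. Fix $(x_0,t_0)\in Q_R(0,0)$ and restrict \eqref{farouhat} to the time slice $t=t_0$: taking $r=|x-x_0|$ for $0<|x-x_0|\le R$ gives $|u(x,t_0)-u(x_0,t_0)-q(x_0,t_0)\cdot(x-x_0)|\le c_0|x-x_0|^{1+\alpha}$. Since $\alpha>0$, the map $x\mapsto u(x,t_0)$ is differentiable at $x_0$ with spatial gradient $q(x_0,t_0)$; in particular $q(x_0,t_0)$ is uniquely determined, and I write $Du$ for it henceforth.

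Next, the spatial oscillation of $Du$. Given $x_1,x_2\in B_{R/2}(0)$ with $d=|x_1-x_2|>0$ and $t_0\in(-R^2/4,0]$, take $r=d\ (\le R)$ and work on the common slice $t=t_0$. The ball $B_{d/2}\bigl(\tfrac{x_1+x_2}{2}\bigr)$ is contained in $\overline{B_r(x_1)}\cap\overline{B_r(x_2)}$, so both instances of \eqref{farouhat}, centered at $(x_1,t_0)$ and at $(x_2,t_0)$, hold there; subtracting them shows that the affine function $x\mapsto\bigl(Du(x_1,t_0)-Du(x_2,t_0)\bigr)\cdot x$ has oscillation at most $4c_0 d^{1+\alpha}$ over that ball, whence $|Du(x_1,t_0)-Du(x_2,t_0)|\le C(n)c_0|x_1-x_2|^{\alpha}$. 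For the temporal oscillation, given $x_0\in B_{R/2}(0)$ and $s_0<t_0$ in $(-R^2/4,0]$, set $r=\sqrt{t_0-s_0}\ (<R)$; the key point is that, after closing up the parabolic cylinders, the bottom slice $\overline{B_r(x_0)}\times\{s_0\}$ of $\overline{Q_r(x_0,t_0)}$ is exactly the top slice of $\overline{Q_r(x_0,s_0)}$, so \eqref{farouhat} applies on that slice with both centers $(x_0,t_0)$ and $(x_0,s_0)$. Subtracting and evaluating at $x=x_0\pm r e$, with $e$ in the direction of $Du(x_0,t_0)-Du(x_0,s_0)$, gives $|Du(x_0,t_0)-Du(x_0,s_0)|\le 2c_0 r^{\alpha}=2c_0|t_0-s_0|^{\alpha/2}$. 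Finally, for arbitrary $(x_1,t_1),(x_2,t_2)\in Q_{R/2}(0,0)$, inserting the intermediate point $(x_1,t_2)$ and applying these two bounds yields \eqref{liebdet}.

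The computations are elementary; what needs care is purely geometric. In the spatial step one must check that two balls of radius $d$ whose centers are a distance $d$ apart still share an interior ball (of radius $d/2$ about the midpoint), and keep track that all radii stay $\le R$ and all base points stay in $Q_R(0,0)$, which is automatic once we confine ourselves to $Q_{R/2}(0,0)$. The only genuinely delicate point, though still mild, is the time-regularity step: because parabolic cylinders extend only backward in time, the points $(x_0,s_0)$ and $(x_0,t_0)$ cannot be enclosed in a single cylinder, and it is precisely the matching of the bottom face of $\overline{Q_r(x_0,t_0)}$ with the top face of $\overline{Q_r(x_0,s_0)}$ --- together with the scaling $r=\sqrt{t_0-s_0}$ --- that produces the time exponent $\alpha/2$ with a constant depending only on $n$.
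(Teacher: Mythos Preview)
Your argument is correct. The core mechanism---subtract two instances of \eqref{farouhat} on an overlap region and read off a bound on the difference of gradients from the oscillation of the resulting affine function---is the same as in the paper, but the organization differs. The paper treats an arbitrary pair $(x,t),(y,s)$ in one shot by setting $r^2=|x-y|^2+|t-s|$, rotating so that $x-y$ points along $e_1$, and introducing an auxiliary point $z=\tfrac r4 e_i$ to recover the remaining components $\partial_i u$; your version instead splits into a purely spatial step (overlap ball $B_{d/2}$ about the midpoint) and a purely temporal step (matching the bottom face of $\overline{Q_r(x_0,t_0)}$ with the top face of $\overline{Q_r(x_0,s_0)}$ at scale $r=\sqrt{t_0-s_0}$), and then recombines by the triangle inequality via $(x_1,t_2)$.

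Your decomposition is slightly more robust for one-sided parabolic cylinders: the paper's joint treatment tacitly needs both $(y,s)\in\overline{Q_r(x,t)}$ and $(x,t)\in\overline{Q_r(y,s)}$, which for backward cylinders is delicate unless $s=t$, whereas your face-matching step handles the time direction cleanly. On the other hand, the paper's auxiliary-point device gives the bound for all components of $Du$ in one pass without a separate space/time split. Either route yields \eqref{liebdet} with a dimension-bounded constant (in fact your constants are at most $4$ and $2$, independent of $n$).
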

	We  give details of the proof  for the reader's convenience, the result is well known.
	There are two ways to prove Lemma \ref{lemendi}. A first  idea of the proof can be found in \cite[Lemma 12.12]{lieberm}. In this section we  decided to adapt to the parabolic setting the arguments  used in \cite{krylovholder, arauijoz}. 
	\begin{proof}
	First notice that the estimate  \eqref{farouhat} implies that $u$ is differentiable with respect to the space variable in $Q_R(0,0)$ and  for $(x_0, t_0)$ in $Q_R(0,0)$, we have 	$$Du(x_0,t_0)=q(x_0,t_0).$$
	
	Let $(x,t)$, $(y,s)\in Q_{\frac R 2}(0,0)$ and define $r$ as  $r^2=|x-y|^2+|t-s|$.
	Without loss of generality we can assume that $x=-\frac{r}{4} e_1$ and $y=\frac r4 e_1$ where $e_i$ are the vector of the canonical base of $\R^n$.
	
Using \eqref{farouhat}, we have 
\begin{align}\label{pertha1}
|u(y,s)- Du(x,t)\cdot (y-x)-u(x,t)|\leq c_0r^{1+\alpha},\\
|u(x,t)-Du(y,s)\cdot (x-y)-u(y,s)|\leq c_0r^{1+\alpha}.\nonumber
\end{align}
Adding these two inequalities we get 
$$|(Du(x,t)-Du(y,s))\cdot (x-y)|\leq 2c_0r^{1+\alpha},$$
that is 
\begin{equation}\label{pertha2}
|\partial_1 u(y,s)-\partial_1 u(x,t)|\leq 4c_0r^{\alpha}.
\end{equation}
For $i=2,...,n$, we  fix $z=\frac r4 e_i$. Using \eqref{farouhat}, we have 
\begin{align*}
|u(z,t)- Du(x,t)\cdot (z-x)-u(x,t)|\leq c_0r^{1+\alpha},\\
|u(y,s)+Du(y,s)\cdot (z-y)-u(z,t)|\leq c_0r^{1+\alpha}.
\end{align*}
Adding the two inequalities  and using the triangle inequality, we get 
\begin{align*}
|u(y,s)-u(x,t)-Du(x,t)&\cdot (z-x)+Du(y,s)\cdot (z-y)|\\
&\leq 2c_0r^{1+\alpha},
\end{align*}
that is
\begin{align}\label{pertha3}
|u(y,s)-u(x,t)-Du(x,t)&\cdot (y-x)+(Du(y,s)-Du(x,t))\cdot (z-y)|\nonumber\\
&\leq 2c_0r^{1+\alpha}.
\end{align}
By the definition of $z$, we have
\begin{align*}
(Du(y,s)-Du(x,t))\cdot (z-y)&=\frac r4(\partial_i u(y,s)-\partial_iu(x,t))\\
&\quad-\frac r4(\partial_1 u(y,s)-\partial_1 u(x,t)).
\end{align*}
Using \eqref{pertha1}--\eqref{pertha3}, it follows that 
\begin{align*}
&\frac r4|\partial_i u(y,s)-\partial_iu(x,t)|\\
&\leq \frac r4|\partial_1 u(y,s)-\partial_1 u(x,t)|\\
&+|u(y,s)-u(x,t)-Du(x,t)\cdot (y-x)+(Du(y,s)-Du(x,t))\cdot (z-y)|\\
&+|u(y,s)-u(x,t)-Du(x,t)\cdot (y-x)|\\
&\leq 6c_0r^{1+\alpha}.\qedhere
\end{align*}
\end{proof}


\begin{thebibliography}{MEA166}
\bibitem[AZ15]{arauijoz}
D.~Ara\'ujo and L.~Zhang.
\newblock Interior ${C}^{1,\a}$ estimates for $p$-{L}aplacian equations with
  optimal regularity.
\newblock {\em arXiv preprint \url{http://arxiv.org/abs/1507.06898}}, 2015.
		\bibitem[BBL02]{barbito}
		G. Barles, S. Biton and O. Ley,
\newblock{A geometrical approach to the study of unbounded solutions of quasilinear parabolic equations},
\newblock{\em Arch. Ration. Mech. Anal.}, 162 (4):
    287--325, 2002.
		
		
		
		\bibitem[BG13]{bang13}
		A. Banerjee and N. Garofalo.
		\newblock Gradient bounds and monotonicity of the energy for some nonlinear singular diffusion equations.
		\newblock{\em Indiana Univ. Math. J}, 62 (2):  699--736, 2013.
		\bibitem[BG15]{bang15}
		A. Banerjee and N. Garofalo.
		\newblock On the {D}irichlet boundary value problem for the normalized
		{$p$}-{L}aplacian evolution.
		\newblock {\em Commun. Pure Appl. Anal.}, 14 (1): 1--21, 2015.
		
		\bibitem[CIL92]{crandall1992user}
		M.G. Crandall, H.~Ishii, and P-L Lions.
		\newblock User's guide to viscosity solutions of second order partial
		differential equations.
		\newblock {\em Bull. Am. Math. Soc.}, 27 (1): 1--67, 1992.	
		
		\bibitem[Doe11]{does11}
		K.~Does.
		\newblock An evolution equation involving the normalized $p$-{L}aplacian.
		\newblock {\em Commun. Pure Appl. Anal}, 10 (1): 361--396, 2011.
		\bibitem[ETT15]{moetz}
		A.~Elmoataz, M.~Toutain, and D.~Tenbrinck.
		\newblock On the {$p$}-{L}aplacian and {$\infty$}-{L}aplacian on graphs with
		applications in image and data processing.
		\newblock {\em SIAM J. Imaging Sci.}, 8 (4): 2412--2451, 2015.
		
	
		\bibitem[Eva07]{evgame}
		L.~C. Evans.
		\newblock The 1-{L}aplacian, the {$\infty$}-{L}aplacian and differential games.
		\newblock In {\em Perspectives in nonlinear partial differential equations},Amer. Math. Soc.,
		Providence, RI.
		 \newblock {\em Contemp. Math.},446, 245--254, 2007.
	
		\bibitem[ES91]{spruck}
		L. C. Evans and J. Spruck,
		\newblock Motion of level sets by mean curvature I,
		\newblock{\em Differential Geom},
		33 (3): 635--681, 1991.
		
	
		
		  
	
		  \bibitem[IJS16]{sil16}
		   C.~Imbert, T.~{Jin} and L.~Silvestre.
		   \newblock{H\"older gradient estimates for a class of singular or degenerate parabolic equations},
		   \newblock{\em Preprint}, \url{https://arxiv.org/abs/1609.01123}, 2016.
		   
		   
		\bibitem[IS12]{cyril_luis}
		C.~ Imbert and L.~ Silvestre.
		\newblock Introduction to fully nonlinear parabolic equations, lecture notes.
		\newblock \url{https://cyrilimbert.files.wordpress.com/2012/06/pln.pdf}, 2012.
		
		
 \bibitem[IS13]{silimb}
			  C.~Imbert and L.~Silvestre.
			  \newblock $C^{1,\alpha}$ regularity of solutions of some degenerate fully non-linear elliptic equations.
			  \newblock {\em Adv. Math.}, 233 (1): 196--206, 2013.
			  
		\bibitem[IL90]{ishiilions}
		H. Ishii and P-L. Lions.
		\newblock Viscosity solutions of fully nonlinear second-order elliptic partial
		differential equations.
		\newblock {\em J. Differential equations}, 83 (1): 26--78, 1990.
		
			\bibitem[JS15]{JinSil2015}
			T.~{Jin} and L.~{Silvestre}.
			\newblock {H\"older gradient estimates for parabolic homogeneous $p$-Laplacian
				equations}.
			\newblock {\em To appear  in J. Math. Pures. Appl}, 2016.
			
		\bibitem[Juu14]{juutas}
		P. Juutinen,
		\newblock{Decay estimates in the supremum norm for
the solutions to a nonlinear evolution equation},
\newblock{\em Proc. Roy. Soc. Edinburgh Sect. A}, 144 (3):
557--566, 2014.
		\bibitem[JK06]{juuka}
		P. Juutinen and B. Kawohl.
		\newblock On the evolution governed by the infinity {L}aplacian.
		\newblock {\em Math. Ann.}, 335 (4): 819--851, 2006.
		
		
		
		\bibitem[KS06]{kohnserf}
		\newblock R. V. Kohn and S. Serfaty,
		\newblock A deterministic-control-based approach to motion by curvature,
		\newblock{\em
		Comm. Pure Appl. Math.},
		59 (3): 344-407, 2006.
					\bibitem[KS80]{krylov1980}
		N.~V. Krylov and M.~V. Safonov.
		\newblock {\em A certain property of solutions of parabolic equations with
			measurable coefficients}.
		\newblock Izvestiya Rossiiskoi Akademii Nauk. Seriya Matematicheskaya,
		44 (1): 161--175, 1980.
		
		\bibitem[Kry87]{krylov_para_book}
		N.~V. Krylov.
		\newblock {\em Nonlinear elliptic and parabolic equations of the second order},
		volume~7 of {\em Mathematics and its Applications (Soviet Series)}.
		\newblock D. Reidel Publishing Co., Dordrecht, 1987.
		\newblock Translated from the Russian by P. L. Buzytsky.
		
		

		\bibitem[Kry96]{krylovholder}
		N, V. Krylov, 
		\newblock{\em Lectures on elliptic and parabolic equations in H\"older spaces}
		\newblock  (No. 12). American Mathematical Soc, 1996.
		
	
		\bibitem[LU68]{ladyuralpara}
		O.A.  Lady\v zenskaja, V.A. Solonikov and N.N. Ural'ceva.
		\newblock {\em Linear and Quasi-linear Equations of Parabolic Type}.
		\newblock American mathematical society, 1968.
		\bibitem[Lie96]{lieberm}
		G.~M Lieberman.
		\newblock {\em Second order parabolic differential equations}.
		\newblock World scientific, 1996.
		
		
		
		\bibitem[LY15]{fliu}
		F. Liu and  X.P. Yang.   
		\newblock
		Viscosity Solutions to a Parabolic Inhomogeneous Equation Associated with Infinity {L}aplacian.
		\newblock{\em     Acta Mathematica Sinica, English Series}, Acta Mathematica Sinica, English Series
		31 (2): 255--271, 2015.
	
		\bibitem[MPR10]{manfredipr10c}
		J.J. Manfredi, M.~Parviainen, and J.D. Rossi.
		\newblock An asymptotic mean value characterization for a class of nonlinear
		parabolic equations related to tug-of-war games.
		\newblock {\em SIAM J.\ Math.\ Anal.}, 42 (5): 2058--2081, 2010.
		
		\bibitem[NP14]{nystpar}
		K. Nystr\"om and M. Parviainen.
		\newblock Tug-of-war, market manipulation and option pricing.
		\newblock{\em Journal of mathematical finance}, (0) 1--34 , 2014
		
		\bibitem[PR16]{parvruo}
		M. Parviainen and E. Ruosteenoja.
		\newblock Local regularity for time-dependent tug-of-war games with varying probabilities.
		\newblock {\em J. Differential Equations}, 261 (2): 1357--1398, 2016.
		
		\bibitem[PS08]{peress08}
		Y.~Peres and S.~Sheffield.
		\newblock tug-of-war with noise: a game-theoretic view of the
		{$p$}-{L}aplacian.
		\newblock {\em Duke Math. J.}, 145 (1): 91--120, 2008.
		
		\bibitem[PSSW09]{peresssw09}
		Y.~Peres, O.~Schramm, S.~Sheffield, and D.~B. Wilson.
		\newblock tug-of-war and the infinity {L}aplacian.
		\newblock {\em J. Amer. Math. Soc.}, 22 (1): 167--210, 2009.
		
		\bibitem[Sav07]{savin_perturb_elliptic}
		O. Savin.
		\newblock Small perturbation solutions for elliptic equations.
		\newblock {\em Comm. Partial Differential Equations}, 32 (4-6): 557--578, 2007.
		
\bibitem[Theo75]{theo}
M.C.~Theobald.
\newblock An inequality for the trace of the product of two symmetric matrices.
\newblock {\em Mathematical Proceedings of the Cambridge Philosophical Society}, 77(2):265--267, 1975.
		\bibitem[Wa92I]{LWang1}
		L. Wang.
		\newblock On the regularity theory of fully nonlinear parabolic equations. {I}.
		\newblock {\em Comm. Pure Appl. Math.}, 45 (1): 27--76, 1992.
		\bibitem[Wa92II]{wang_para_II}
		L. Wang.
		\newblock On the regularity theory of fully nonlinear parabolic equations.
		{II}.
		\newblock {\em Comm. Pure Appl. Math.}, 45 (2): 141--178, 1992.
		
		\bibitem[Wan13]{wang1}
		Y.~Wang.
		\newblock Small perturbation solutions for parabolic equations.
		\newblock {\em Indiana Univ. Math. J.}, 62: 671--697, 2013.
		
\end{thebibliography}
\end{document}